\renewcommand{\phi}{\varphi}
\newcommand{\GG}{\mathrm{G}}
\newcommand{\ds}{\displaystyle}
\newcommand{\ben}{\begin{enumerate}}
\newcommand{\een}{\end{enumerate}}
\newcommand{\eq}[2][label]{\begin{equation}\label{#1}#2\end{equation}}
\newcommand{\av}[2]{\langle #1\rangle_{_{\scriptstyle #2}}}
\newcommand{\bel}[1]{\boldsymbol{#1}}
\newcommand{\df}{\stackrel{\mathrm{def}}{=}}
\newcommand{\BMO}{{\rm BMO}}
\newcommand{\kf}{K_{h,Q}(\varphi)}
\newtheorem{theorem}{Theorem}[section]
\newtheorem{lemma}[theorem]{Lemma}
\newtheorem*{theorem*}{Theorem}{\bf}{\it}
\newtheorem*{proposition*}{Proposition}{\bf}{\it}
\newtheorem*{observation*}{Observation}{\bf}{\it}
\newtheorem*{lemma*}{Lemma}{\bf}{\it}
\theoremstyle{definition}
\theoremstyle{remark}
\newtheorem{remark}[theorem]{Remark}
\numberwithin{equation}{section}
\begin{document}
\title{Weak integral conditions for BMO}
\author{ A. A. Logunov}
\author{ L. Slavin}
\author{ D. M. Stolyarov}
\author{V. Vasyunin}
\author{P. B. Zatitskiy}

\subjclass[2010]{Primary 42B35, 49K20}

\keywords{BMO, different norms, Bellman function}

\date{August 12, 2013}

 \begin{abstract}
 We study the question of how much one can weaken the defining condition of BMO. Specifically, we show that  if $Q$ is a cube in $\mathbb{R}^n$ and $h:[0,\infty)\to[0,\infty)$ is such that $h(t)\underset{t\to\infty}{\longrightarrow}\infty,$ then
 $$
\sup_{J~\text{subcube}~Q}\frac1{|J|}\int_J h\big(\big|\varphi-\textstyle{\frac1{|J|} \int_J\varphi} \big|\big)<\infty\quad\Longrightarrow\quad \varphi\in\BMO(Q).
 $$
Under some additional assumptions on $h$ we obtain estimates on $\|\varphi\|_{\BMO}$ in terms of the supremum above. We also show that even though the condition $h(t)\underset{t\to\infty}{\longrightarrow}\infty$ is not necessary for this implication to hold, it becomes necessary if one considers the dyadic $\BMO.$ 
\end{abstract}

\maketitle
\section{Introduction and main results} 
\label{preliminaries}
Let $|E|$ denote the Lebesgue measure of a measurable set $E\subset \mathbb{R}^n.$ If $0<|E|<\infty,$ we use the symbol $\av{\varphi}E$ for the average of a locally integrable function $\varphi$ over $E,$ $\av{\varphi}E=\frac1{|E|}\int_E\varphi.$

Our main object of interest is the space $\BMO(\mathbb{R}^n),$ first described by John and Nirenberg~\cite{jn}:
\eq[bmo1]{
\BMO(\mathbb{R}^n)=\{\varphi\in L^1_{loc}\colon \sup_{\text{cube}~J}\av{|\varphi-\av{\varphi}J|}J<\infty\}.
}

Here the supremum is taken over all cubes $J$ in $\mathbb{R}^n$ with sides parallel to the coordinate axes. We will use the symbol $\BMO(Q)$ when $J$ is restricted to be a subcube of a given cube $Q$ or simply write $\BMO$ when the context is clear or inconsequential.

It is known that for any $p>0$ the mean oscillations in~\eqref{bmo1} can be replaced with mean $p$-oscillations:
\eq[bmo2]{
\BMO=\{\varphi\in L^1_{loc}\colon \sup_{\text{cube}~J}\av{|\varphi-\av{\varphi}J|^p}J<\infty\}.
}
Our preferred definition is the one with $p=2,$ and we reserve the symbol $\|\varphi\|_{\BMO}$ for this choice of $p:$
$$
\|\varphi\|^{}_{\BMO}=\sup_{\text{cube}~J}\Big(\av{|\varphi-\av{\varphi}J|^2}J \Big)^{1/2}=\sup_{\text{cube}~J}\Big(\av{\varphi^2}J-\av{\varphi}J^2\Big)^{1/2}.
$$
The BMO condition~\eqref{bmo1} self-improves even further, to local exponential integrability, a fact that is quantified by the John--Nirenberg inequality 
(see \cite{jn}).

In this note, we investigate the reverse question: How far can one weaken the defining condition of BMO? Specifically,  if $h$ is a non-negative function on $[0,\infty)$ and $\varphi$ an integrable function on a cube $Q,$ let
\eq[e0]{
\kf=\sup_{J~\text{subcube}~Q}\av{h(|\varphi-\av{\varphi}J|)}J.
}
What conditions on $h$ would ensure that if $\kf<\infty,$ then $\varphi\in\BMO(Q)?$ (Or, in shorthand: ``For what $h$ does $K_h$ imply BMO?'').

The question of defining BMO by weak conditions is now classical, dating back to John~\cite{john} and Str\"omberg~\cite{stromberg}. The formulation closest to ours was studied by Long and Yang~\cite{ly}. They proved that if $h$ is a non-negative, increasing, and continuous function on $[0,\infty)$ such that 
$$
\lim_{t\to\infty}h(t)=\infty\quad\text{and}\quad h(s+t)\leqslant h(s)+h(t)+C,
$$
then a function satisfying the $K_h$ condition also satisfies the John--Nirenberg inequality and thus is in BMO. (They studied this question in spaces of homogeneous type, while attributing the result for $\mathbb{R}^n$ to Fang and Wang, though we have not been able to locate that paper.) This result was further generalized by Shi and Torchinsky in~\cite{st}.

We prove a result similar to those in~\cite{ly} and~\cite{st}, but out proof uses a different, novel technique, which we trust is particularly well suited for working with BMO. Our focus is on explicit estimates, relating the BMO norm of a function and its $K_h$ ``norm.'' As far as we know, the norm estimates we present are new in all dimensions, but we also prove a separate, much refined result in dimension 1. As a corollary to the main theorem, we obtain an even stronger qualitative result, one that removes all conditions on $h$ except for the limit condition (this, second result, although of independent interest, could also be derived from Long and Yang's theorem). Here are our main theorems.

\begin{theorem}
\label{t0}
Let $h$ be a function continuous on $[0,\infty)$ and thrice differentiable on $(0,\infty)$ such that 
\eq[c1]{
h(0)=0,\quad h(t)\underset{t\to\infty}{\longrightarrow}\infty,
} 
and for all $t>0$
\eq[c2]{
h'(t)>0,~h''(t)<0,~h'''(t)>0.
}
Let $Q$ be a cube in $\mathbb{R}^n.$  Assume that a function $\varphi\in L^1(Q)$ satisfies $\kf<\infty.$ Then $\varphi\in\BMO(Q)$ and
\eq[ms]{
h^{-1}\Big(\kf\Big)\leqslant\|\varphi\|^{}_{\BMO(Q)}\leqslant 2^{-n/2-1}h^{-1}\Big(2^{n+2} \kf\Big),
}
where $h^{-1}$ is the inverse function to $h$ on the interval $[0,\infty).$
\end{theorem}
\begin{remark}
\label{rr}
The left-hand inequality in~\eqref{ms} is elementary: because $h$ is concave and increasing, we have, for any cube $J,$
$$
\av{h(|\varphi-\av{\varphi}J|)}J\leqslant h(\av{|\varphi-\av{\varphi}J|}J)\leqslant h(\|\varphi\|^{}_{\BMO}),
$$
and we can now take the supremum over all $J$ on the left and then invert $h.$ Thus of principal interest here is the right-hand estimate in~\eqref{ms}.
\end{remark}
\begin{remark}
It is easy to show that any increasing, concave function $h$ on $[0,\infty)$ such that $h(0)=0$ automatically satisfies the triangle inequality:
\eq[e1]{
h(|s+t|)\leqslant h(|s|)+h(|t|), \quad\forall s,t\in\mathbb{R}.
}
\end{remark}

Our method allows us to obtain a much stronger version of Theorem~\ref{t0} in the case $n=1.$
\begin{theorem}
\label{t0.5}
Let $h$ be as in Theorem~\ref{t0}. Let $Q$ be an interval and assume that $\varphi$ is a non-constant\textup, locally integrable function on $Q$ that satisfies $\kf<\infty.$ Then $\varphi\in\BMO(Q)$ and for any subinterval $J$ of $Q$ we have the following sharp inequality\textup:
\eq[uu2]{
\frac{\av{\varphi^2}J-\av{\varphi}J^2}{4\|\varphi\|^2_{\BMO(Q)}}\,h(2\|\varphi\|^{}_{\BMO(Q)})\leqslant \av{h(|\varphi-\av\varphi J|)}J.
}
Consequently\textup,
\eq[uu3]{
h^{-1}\big(\kf\big)\leqslant\|\varphi\|^{}_{\BMO(Q)}\leqslant\frac12\,h^{-1}\big(4\kf\big).
}
\end{theorem}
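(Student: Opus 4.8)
The plan is to recast \eqref{uu2} as a lower estimate for a Bellman function on the parabolic strip and to read off \eqref{uu3} from it. Assuming $\kf<\infty$, Theorem~\ref{t0} already gives $\varphi\in\BMO(Q)$, so I may set $\ve\df\|\varphi\|_{\BMO(Q)}\in(0,\infty)$ (it is positive since $\varphi$ is non-constant). Fix a subinterval $J\subseteq Q$ and put $g\df\varphi-\av\varphi J$, so that $\av gJ=0$, $\av{g^2}J=\av{\varphi^2}J-\av\varphi J^2$, and $\|g\|_{\BMO(J)}=\|\varphi\|_{\BMO(J)}\le\ve$; hence $\big(\av g{J'},\av{g^2}{J'}\big)$ lies in the strip $\Oe\df\{(x_1,x_2):x_1^2\le x_2\le x_1^2+\ve^2\}$ for every $J'\subseteq J$. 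Introduce
$$
\mathcal B(x_1,x_2)\df\inf\Big\{\av{h(|f|)}{[0,1]}:\ \av f{[0,1]}=x_1,\ \av{f^2}{[0,1]}=x_2,\ \|f\|_{\BMO([0,1])}\le\ve\Big\},\qquad(x_1,x_2)\in\Oe,
$$
which is independent of the underlying interval after an affine change of variable. Then \eqref{uu2} is exactly
\eq[auxB]{\mathcal B(0,u)\ \ge\ \frac{u}{4\ve^2}\,h(2\ve),\qquad u\in[0,\ve^2],}
applied with $u=\av{g^2}J$; and since $\sup_J\big(\av{\varphi^2}J-\av\varphi J^2\big)=\|\varphi\|^2_{\BMO(Q)}$, taking the supremum over $J$ in \eqref{uu2} yields $\tfrac14 h(2\|\varphi\|_{\BMO(Q)})\le\kf$, i.e.\ the right-hand bound in \eqref{uu3} after inverting the increasing $h$ (the left-hand bound there is Remark~\ref{rr}). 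So the whole matter reduces to the two inequalities in \eqref{auxB}.

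For the lower bound in \eqref{auxB} I would use the standard supersolution argument: a function $B$ on $\Oe$ that is convex along every line segment contained in $\Oe$ and satisfies $B(s,s^2)\le h(|s|)$ on the lower parabola is a minorant of $\mathcal B$. Indeed, for a competitor $f$ on $[0,1]$ the running averages $(m(s),M(s))\df\big(\av f{[0,s]},\av{f^2}{[0,s]}\big)$ stay in $\Oe$, the map $s\mapsto s\,B(m(s),M(s))$ has derivative equal to the value at $(f(s),f(s)^2)$ of the tangent plane to $B$ at $(m(s),M(s))$, which — this is built into the construction of $B$ below — does not exceed $h(|f(s)|)$, and integrating over $[0,1]$ gives $\av{h(|f|)}{[0,1]}\ge B\big(\av f{[0,1]},\av{f^2}{[0,1]}\big)$. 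It then suffices to construct such a $B$ with $B(0,u)=\tfrac{u}{4\ve^2}h(2\ve)$. I would obtain it from the usual foliation of $\Oe$ by extremal line segments running off the lower parabola; along the axis $\{x_1=0\}$ the extremal segment through $(0,u)$ forces $B(0,\cdot)$ to be the affine interpolation between $B(0,0)=h(0)=0$ and $B(0,\ve^2)=\tfrac14 h(2\ve)$ — this is the source of the constants $2\ve$ and $4\ve^2$, and it agrees with the extremizer described below. The hard part, and the technical heart of the proof, is verifying that this $B$ really is convex along every segment inside $\Oe$, in particular across the curve where the two families of extremal segments are glued (the counterpart of the corner of $s\mapsto h(|s|)$ at $s=0$); that verification reduces to differential inequalities for $h$ that must be extracted from the sign conditions $h'>0$, $h''<0$, $h'''>0$ of \eqref{c2}.

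For the matching upper bound in \eqref{auxB}, hence for the sharpness of \eqref{uu2}, I would argue as follows. Concatenating near-optimal competitors over a segment lying in $\Oe$ shows that $\mathcal B$ is convex along such segments; since the whole segment $\{x_1=0,\ 0\le x_2\le\ve^2\}$ lies in $\Oe$ and $\mathcal B(0,0)=h(0)=0$ (the only competitor with vanishing second moment is $f\equiv0$), this gives $\mathcal B(0,u)\le\tfrac{u}{\ve^2}\,\mathcal B(0,\ve^2)$, so it remains to show $\mathcal B(0,\ve^2)\le\tfrac14 h(2\ve)$. That is witnessed by a self-similar arrangement of the values $2\ve$, $0$, $-2\ve$, with $2\ve$ and $-2\ve$ each occurring with density $\tfrac18$ and the blocks at all scales chosen so that the oscillation over every subinterval stays at most $\ve^2$: such a function has mean $0$, second moment $\ve^2$, and $h$-average $\tfrac14 h(2\ve)$. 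Combining the two inequalities gives $\mathcal B(0,u)=\tfrac{u}{4\ve^2}h(2\ve)$ on $[0,\ve^2]$, which is \eqref{uu2} together with its sharpness, and then \eqref{uu3} as explained above.
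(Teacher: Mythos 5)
Your overall strategy — reduce \eqref{uu2} to a lower bound for the Bellman function on the parabolic strip $\Oe$, construct a locally convex sub-solution with $B(0,u)=\frac{u}{4\ve^2}h(2\ve)$, and obtain \eqref{uu3} by taking the supremum over $J$ and inverting $h$ — is the same as the paper's. The paper also constructs the function $\GG_t$ of \eqref{defG}, checks $\GG_t(0,u)=\frac{u}{4t^2}h(2t)$, and establishes sharpness with a three-step extremizer; your proposal matches this structure step for step. The two places where you diverge are the mechanism of ``Bellman induction'' and the approximation argument, and the first of these contains a genuine gap.

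The paper avoids the $\sqrt2$-loss of the dyadic induction by invoking Lemma~6.4 of~\cite{svas}, which constructs a \emph{quasi-dyadic} system of splits of $J$ chosen so that every split point remains inside $\Omega_t$; this requires only local convexity of $\GG_t$ on $\Omega_t$. You instead propose a running-average (continuous-time) argument: differentiating $s\mapsto s\,B(m(s),M(s))$ and comparing the derivative with $h(|f(s)|)$. That derivative is the value of the tangent plane of $B$ at $(m(s),M(s))$ evaluated at the lower-parabola point $(f(s),f(s)^2)$, and the inequality you need,
\[
B(m,M)+B_{x_1}(m,M)\,(s-m)+B_{x_2}(m,M)\,(s^2-M)\ \le\ h(|s|)\qquad\text{for all }s\in\mathbb R,\ (m,M)\in\Oe,
\]
is \emph{strictly stronger} than local convexity on $\Oe$ plus the boundary condition $B(s,s^2)\le h(|s|)$. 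Local convexity controls the tangent plane only along segments that stay inside $\Oe$, and the chord from $(m,M)$ to $(f(s),f(s)^2)$ typically leaves $\Oe$: parametrizing the chord one finds $P_2(\lambda)-P_1(\lambda)^2\le\ve^2+\lambda(1-\lambda)(m-f(s))^2$, and $|m-f(s)|$ is unbounded on any BMO function. So your opening sentence — ``a function $B$ on $\Oe$ that is convex along every line segment contained in $\Oe$ and satisfies $B(s,s^2)\le h(|s|)$ is a minorant of $\mathcal B$'' — is not correct as stated; the running-average argument needs the global tangent-plane inequality above, and that is a separate verification that your description of ``the hard part'' (local convexity across the gluing curve) does not cover. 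The paper's route via the quasi-dyadic Lemma~6.4 of~\cite{svas} is precisely how one sidesteps this issue using only local convexity, and it is the key reason that reference is cited rather than a naive integration-by-parts argument.

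Two smaller remarks. First, the paper proves \eqref{uu2} by first establishing it for the truncations $\varphi_M$ (where the Bellman lemma applies cleanly) and then passing to the limit via the Lipschitz estimate of Lemma~\ref{l01.5}; in your running-average formulation you would still need to justify that $s\,B(m(s),M(s))\to0$ as $s\to0^+$ for a general $\varphi\in\BMO(Q)$, which is a technicality you should acknowledge (a cutoff at height $M$, as in the paper, takes care of it). Second, for sharpness the paper uses a single-scale step function taking the values $\pm2t$ on two intervals of measure $1/8$ each and $0$ elsewhere; your self-similar construction is more elaborate than necessary, and in any case you would need to actually check that its BMO norm is $\ve$ at every scale, which is nontrivial, whereas the paper's one-scale example is immediate. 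Your derivation of \eqref{uu3} from \eqref{uu2} and of the left inequality from Remark~\ref{rr} is correct and matches the paper.
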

\begin{remark}
We note that while inequality~\eqref{uu2} is sharp, in that there exists a non-constant function $\varphi$ for which it becomes an equality, the resulting norm inequality~\eqref{uu3} may not be sharp. This phenomenon is explained in~\cite{svas}; it is due to the fact that the suprema in $\|\varphi\|_{\BMO(Q)}$ and $\kf$ are, in general, attained on different subintervals of $Q.$

Nonetheless,~\eqref{uu3} gives the best known bounds for the BMO norm of $\varphi$ in terms of $\kf.$ In particular, setting $h(t)=t^p$ for $0<p<1$, we get the norm equivalence inequality of~\cite{svas}:
$$
\|\varphi\|^{}_{\BMO^p(Q)}\leqslant \|\varphi\|^{}_{\BMO(Q)}\leqslant 2^{2/p-1}\,\|\varphi\|^{}_{\BMO^p(Q)},
$$
where $\|\varphi\|^{}_{\BMO^p(Q)}$ is the supremum in~\eqref{bmo2} raised to the power $1/p.$

Setting $h(t)=\log(1+t)$ provides an appropriate analog of $\BMO^p$ for $p=0$ (note that even though 
$(\av{|\varphi-\av{\varphi}J|^p}J)^{1/p}\to \exp{\av{\log|\varphi-\av{\varphi}J|}J}$ as $p\to0$, the function $\log t$ is not a suitable candidate for this role, as it changes sign on $(0,\infty)$). For this choice of $h$ we get
$$
e^{\kf}-1\leqslant\|\varphi\|^{}_{\BMO(Q)}\leqslant \frac12(e^{4\kf}-1).
$$

\end{remark}

Our next theorem reflects the fact that an (almost) arbitrary $h$ can be modified to fit the hypotheses of Theorem~\ref{t0}, while largely preserving the $K_h$ condition.

\begin{theorem} \label{t1}
Let $h\colon [0,\infty) \to [0,\infty)$ be a measurable function such that
\eq[h1]{
h(t)\underset{t\to\infty}{\longrightarrow}\infty.
}
Let $Q$ be a cube in $\mathbb{R}^n$ and let $\varphi\in L^1(Q)$ be such that $K_{h,Q}(\varphi)<\infty.$ Then $\varphi \in \BMO(Q)$.
\end{theorem}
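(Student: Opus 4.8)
The plan is to reduce Theorem~\ref{t1} to Theorem~\ref{t0}, by replacing the given, possibly very badly behaved, function $h$ with an auxiliary function $\widetilde h$ that satisfies hypotheses~\eqref{c1}--\eqref{c2} of Theorem~\ref{t0} and lies below $h$ up to an additive constant: $\widetilde h(t)\le h(t)+C$ for all $t\ge 0$. Given such a $\widetilde h$, the reduction is immediate: for every subcube $J\subseteq Q$,
\[
\av{\widetilde h(|\varphi-\av\varphi J|)}J\le\av{h(|\varphi-\av\varphi J|)}J+C\le K_{h,Q}(\varphi)+C,
\]
so $K_{\widetilde h,Q}(\varphi)\le K_{h,Q}(\varphi)+C<\infty$, and Theorem~\ref{t0} applied to $\widetilde h$ gives $\varphi\in\BMO(Q)$. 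An additive constant is the natural form of slack here — a multiplicative bound $\widetilde h\le Ch$ would be useless when $h$ vanishes on a long initial interval — while conversely any affine bound $\widetilde h\le Ah+B$ would work just as well.

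All the work is thus in constructing $\widetilde h$, and the point to keep in mind is that its rate of growth to infinity must be adapted to that of $h$, which may be arbitrarily slow. Since $h(t)\to\infty$, for each integer $j\ge 1$ one can fix $T_j$ with $h(t)>j$ for all $t\ge T_j$, and then choose recursively $t_1=\max(T_1,1)$ and $t_{j+1}=\max(T_{j+1},2t_j)$. Then $(t_j)$ is strictly increasing, $t_j\ge 1$, $t_{j+1}\ge 2t_j$ (hence $t_j\to\infty$ geometrically and $\sum_j 1/t_j<\infty$), and $h(t)>j$ whenever $t\ge t_j$. I would then set
\[
\widetilde h(t)=\sum_{j=1}^\infty\bigl(1-e^{-t/t_j}\bigr).
\]

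Each summand $f_j(t)=1-e^{-t/t_j}$ has $f_j(0)=0$, $f_j'>0$, $f_j''<0$, $f_j'''>0$, and $|f_j^{(k)}(t)|\le t_j^{-k}\le t_j^{-1}$ on $[0,\infty)$ for $k=1,2,3$; since $\sum_j 1/t_j<\infty$, the series defining $\widetilde h$ and its three formally differentiated series converge uniformly on $[0,\infty)$. Hence $\widetilde h$ is well defined, continuous on $[0,\infty)$ and thrice differentiable on $(0,\infty)$, with $\widetilde h(0)=0$, $\widetilde h'>0$, $\widetilde h''<0$, $\widetilde h'''>0$, which is precisely~\eqref{c1}--\eqref{c2} once we note $\widetilde h(t)\ge(1-e^{-1})\,\#\{j:t_j\le t\}\to\infty$. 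For the bound $\widetilde h\le h+C$, the crucial estimate is $\widetilde h(t_k)\le k+1$: splitting the series at $j=k$, the first $k$ terms sum to at most $k$, while for $j>k$ the geometric spacing gives $t_k/t_j\le 2^{k-j}$, so $\sum_{j>k}(1-e^{-t_k/t_j})\le\sum_{i\ge 1}2^{-i}=1$. Since $\widetilde h$ is increasing, for $t\in[t_k,t_{k+1})$ we get $\widetilde h(t)\le\widetilde h(t_{k+1})\le k+2$, whereas $t\ge t_k\ge T_k$ forces $h(t)>k$; thus $\widetilde h(t)\le h(t)+2$ on $[t_1,\infty)$, and on $[0,t_1)$ simply $\widetilde h(t)\le\widetilde h(t_1)\le 2\le h(t)+2$. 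So $C=2$ works.

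The one genuinely delicate point is the estimate $\widetilde h(t_k)\le k+1$: it is exactly the geometric lower bound $t_{j+1}\ge 2t_j$, built into the choice of the $t_j$ from the start, that keeps the tail $\sum_{j>k}(1-e^{-t_k/t_j})$ bounded and ensures $\widetilde h$ grows no faster than $h$. Everything else — convergence of the series, the sign conditions, the divergence $\widetilde h\to\infty$, and the final reduction — is routine. One could alternatively pass to a concave piecewise-linear minorant of $h$ and mollify it, but arranging the three strict derivative inequalities of~\eqref{c2} by hand is fiddly, whereas the explicit family $1-e^{-t/t_j}$ has them built in termwise.
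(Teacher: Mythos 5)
Your proof is correct and takes essentially the same route as the paper: reduce to Theorem~\ref{t0} by building a concave minorant $\widetilde h$ (up to an additive constant) via the series $\sum_j(1-e^{-t/t_j})$ over a geometrically-spaced sequence chosen from the sub-level sets of $h$. The paper's Lemma~5.1 uses the identical series, with only cosmetic differences in the indexing of the $t_j$'s and in the resulting additive constant.
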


As mentioned earlier, the qualitative result of  Theorem~\ref{t0} is not new. In fact, this theorem can be obtained as a corollary of a key lemma due to John (see~\cite{john}, p.~469; see Str\"omberg~\cite{stromberg} for a sharp version). Of principal interest in this paper is our method of proof. Traditional proofs of BMO inequalities use stopping time arguments, such as the Calder\'on--Zygmund decomposition in its various forms. Our proof uses Bellman functions instead, which is, very roughly, a technique that estimates integral functionals by second-order variational calculus. As a result, 
we obtain integral estimates directly, without having to bound the distribution function of the BMO function in question. Moreover, we can expand the range of applicability of our theorem by improving a single block in its proof. Indeed, the conditions on $h$ are largely those that allow us to solve the resulting Bellman PDE; as the method develops, we expect to be able to handle very general classes of functions. One often sees Bellman functions in conjunction with sharp results. Our results are only sharp in dimension~1 (Theorem~\ref{t0.5}), but they are wholly new in this generality and precision.

We prove Theorem~\ref{t0} by considering a dual problem: rather than assuming the $K_h$ condition on a function $\varphi$ and then estimating $\|\varphi\|_{\BMO(Q)}$ from above in terms of $\kf,$ we estimate $\kf$ {\it from below} in terms of $\|\varphi\|_{\BMO(Q)}$ and then invert the resulting estimate (this idea in the context of BMO was first used in~\cite{slavin}). The quantities involved are not {\it a priori} finite for an arbitrary $\varphi$, and so we first prove our inequalities for dyadic-simple functions $\varphi$ and then employ an approximation argument. To prove the main estimate, we pose an extremal problem and present an appropriate substitute for its difficult-to-find solution (we call that substitute a sub-solution, since it provides a lower estimate). The sub-solution is then used in an inductive argument that yields the desired inequality. 

The Bellman approach to problems on BMO was first implemented in the paper~\cite{sv} on the integral John--Nirenberg inequality. The Bellman treatment of the classical John--Nirenberg inequality can be found in~\cite{vv}. In~\cite{svas}, the general Bellman theory of integral estimates on BMO was initiated in the context of sharp $L^p$ inequalities for BMO functions. That project was much developed in~\cite{bel_ref_short} and~\cite{bel_ref_staraya}. The latter paper supplies the main ingredient we use here (the extremal sub-solution). However, our presentation does not go into the details of its origin. We simply verify that it has the properties we need and then use it in induction. %Thus the paper is self-contained.

The paper is organized as follows: in Section~\ref{pt0} we prove Theorem~\ref{t0}, save for the main estimate, Lemma~\ref{bel}, whose proof is presented in Section~\ref{Bellman_technique}. Sections~\ref{pt0.5} and~\ref{pt1} give the proofs of Theorems~\ref{t0.5} and~\ref{t1}, respectively. Lastly, in Section~\ref{nec}, we consider the question of  whether the condition $\lim_{t\to\infty} h(t)=\infty$ is necessary for $K_h$ to imply $\BMO.$ It turns out that the answers are different for the usual BMO and its dyadic analog, $\BMO^d.$

\section{Proof of Theorem~\ref{t0}}
\label{pt0}
The proof is a reduction to the case of dyadic BMO for which a key estimate is obtained using a Bellman function. As explained in Remark~\ref{rr} we need to prove only the right-hand inequality in~\eqref{ms}.

For a cube $Q\subset\mathbb{R}^n$ and an integer $m\geqslant0,$ let $D(Q)$ be the set of all dyadic subcubes of $Q$ and $D_m(Q)=\{J\in D(Q), |J|=2^{-mn}|Q|\}.$ For $\varphi\in L^1(Q),$ let $\varphi_m$ be its dyadic truncation of order $m:$
$$
\varphi_m=\sum_{J\in D_m(Q)}\av{\varphi}J\chi^{}_J.
$$ 
If $\varphi=\varphi_m$ for some $m,$ we call $\varphi$ {\it dyadic-simple} on $Q.$

We will use the dyadic BMO,
$$
\BMO^d(Q)=\Big\{\varphi\in L^1(Q)\colon \|\varphi\|_{\BMO^d(Q)}^2\df\sup_{J\in D(Q)}\big(\av{\varphi^2}J-\av{\varphi}J^2\big)<\infty\Big\},
$$
and the dyadic analogue of functional~\eqref{e0},
$$
K^d_{h,Q}(\varphi)=\sup_{J\in D(Q)}\av{h(|\varphi-\av{\varphi}J|)}J.
$$
It is clear that $\|\varphi_m\|_{\BMO^d(Q)}\leqslant \|\varphi\|_{\BMO^d(Q)}$ for all $m.$ A slightly weaker inequality also holds with $K^d_h(\cdot)$ in place of $\|\cdot\|_{\BMO^d}.$
\begin{lemma} 
\label{l01}
Suppose $h$ is a non-negative function on $[0,\infty)$ satisfying $h(0)=0$ and~\eqref{e1}.  Then\textup, for any cube $Q,$ function $\varphi\in L^1(Q),$ and integer $m\geqslant0,$
$$
K^d_{h,Q}(\varphi_m)\leqslant 2K^d_{h,Q}(\varphi).
$$
\end{lemma}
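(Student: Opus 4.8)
The plan is to fix an arbitrary dyadic cube $I\in D(Q)$, show $\av{h(|\varphi_m-\av{\varphi_m}I|)}I\le 2K$ where $K:=K^d_{h,Q}(\varphi)$, and then take the supremum over $I$. The argument splits into two cases according to the size of $I$ relative to the generation $m$.

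If $|I|\le 2^{-mn}|Q|$, then $I$ is contained in (or equal to) a single cube of $D_m(Q)$, so $\varphi_m$ is constant on $I$; hence $\av{h(|\varphi_m-\av{\varphi_m}I|)}I=h(0)=0$ and there is nothing to prove. The substantive case is $|I|>2^{-mn}|Q|$: here $I$ is the disjoint union of the cubes of $D_m(Q)$ contained in it, say $J_1,\dots,J_N$, on each of which $\varphi_m\equiv\av{\varphi}{J_i}$. A one-line computation then gives $\av{\varphi_m}I=\av{\varphi}I$ and
\[
\av{h(|\varphi_m-\av{\varphi_m}I|)}I=\frac1{|I|}\sum_{i=1}^N|J_i|\,h\big(|\av{\varphi}{J_i}-\av{\varphi}I|\big).
\]

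The key step is to introduce an auxiliary point and split via the triangle inequality~\eqref{e1}: for each $i$ and each $y\in J_i$, applying~\eqref{e1} with $s=\av{\varphi}{J_i}-\varphi(y)$ and $t=\varphi(y)-\av{\varphi}I$ gives
\[
h\big(|\av{\varphi}{J_i}-\av{\varphi}I|\big)\le h(|\varphi(y)-\av{\varphi}{J_i}|)+h(|\varphi(y)-\av{\varphi}I|).
\]
Averaging this in $y$ over $J_i$ and summing over $i$ with weights $|J_i|/|I|$, the first term yields $\frac1{|I|}\sum_i|J_i|\av{h(|\varphi-\av{\varphi}{J_i}|)}{J_i}$; each summand is $\le K$ since $J_i\in D(Q)$, and $\sum_i|J_i|=|I|$, so this contribution is at most $K$. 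The second term reassembles, because the $J_i$ tile $I$, into $\av{h(|\varphi-\av{\varphi}I|)}I\le K$. Adding the two pieces gives $\av{h(|\varphi_m-\av{\varphi_m}I|)}I\le 2K$, which is the desired bound.

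There is no serious obstacle. The only things to watch are the elementary bookkeeping in the main case (that $I$ is exactly a union of generation-$m$ dyadic cubes, and that passing from $\varphi$ to $\varphi_m$ leaves the average over $I$ unchanged), and the small observation that the auxiliary splitting via~\eqref{e1} is precisely what turns both resulting terms into quantities controlled by the definition of $K^d_{h,Q}$. The constant $2$ is exactly the cost of that split.
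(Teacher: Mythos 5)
Your argument is correct and is essentially the paper's proof: both split into the two cases on the level of the dyadic cube relative to $m$, and in the nontrivial case both apply the triangle inequality~\eqref{e1} with $\varphi(y)$ as the intermediate point so that the two resulting terms are each controlled by $K^d_{h,Q}(\varphi)$. The only difference is presentational — you decompose into the generation-$m$ cubes and write the pointwise inequality explicitly, while the paper first applies~\eqref{e1} at the function level (writing $\varphi_m-\av{\varphi_m}J=(\varphi_m-\varphi)+(\varphi-\av{\varphi}J)$) and then decomposes the $\av{h(|\varphi-\varphi_m|)}J$ term over the generation-$m$ cubes.
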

\begin{proof}
Take any $i\geqslant 0$ and $J\in D_i(Q).$ If $i\geqslant m,$ then $\varphi_m=\av{\varphi_m}J$ on $J$ and so 
$$
\av{h(|\varphi_m-\av{\varphi_m}J|)}J=\av{h(0)}J=0.
$$
If $i<m,$ then $\av{\varphi}J=\av{\varphi_m}J$ and, using~the triangle inequality~\eqref{e1}, we obtain
\begin{align*}
\av{h(|\varphi_m-\av{\varphi_m}J|)}J&\leqslant \av{h(|\varphi-\av{\varphi}J|)}J+\av{h(|\varphi-\varphi_m|)}J\\
&=\av{h(|\varphi-\av{\varphi}J|)}J+\sum_{L\in D_{m-i}(J)}\!\!\!2^{(i-m)n}\,\av{h(|\varphi-\varphi_m|)}L\\
&=\av{h(|\varphi-\av{\varphi}J|)}J+\sum_{L\in D_{m-i}(J)}\!\!\!2^{(i-m)n}\,\av{h(|\varphi-\av{\varphi}L|)}L\\
&\leqslant 2K^d_{h,Q}(\varphi).
\end{align*}
Taking the supremum over all $J$ and then over all $i$ yields the statement of the lemma.
\end{proof}
We will need this lemma to prove that the condition $K^d_{h,Q}(\varphi)<\infty$ implies that $\varphi\in\BMO^d(Q).$ However, if we already know that $\varphi\in\BMO^d,$ we can refine the estimate of Lemma~\ref{l01}, under some additional assumptions on $h.$
\begin{lemma}
\label{l01.5}
Let $h$ be an increasing\textup, concave function on $[0,\infty)$ such that $h(0)=0.$ Then\textup, for any cube $Q,$ function $\varphi\in\BMO^d(Q),$ and integer $m\geqslant 0,$
$$
K^d_{h,Q}(\varphi_m)\leqslant K^d_{h,Q}(\varphi) + h\big(\|\varphi-\varphi_m\|_{\BMO^d(Q)}\big).
$$
\end{lemma}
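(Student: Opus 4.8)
The plan is to mimic the computation in the proof of Lemma~\ref{l01}, but replace the crude ``$+2K^d_h$'' bound coming from the triangle inequality with a sharper bound on the ``error'' term $\av{h(|\varphi-\varphi_m|)}J$ that uses the fact that $\varphi\in\BMO^d(Q)$. Fix $i\geqslant0$ and $J\in D_i(Q)$. If $i\geqslant m$, then $\varphi_m$ is constant on $J$ and $\av{h(|\varphi_m-\av{\varphi_m}J|)}J = h(0) = 0$, so there is nothing to prove. Assume $i<m$. Then $\av{\varphi}J = \av{\varphi_m}J$, and the triangle inequality~\eqref{e1} gives, pointwise on $J$, $h(|\varphi_m-\av{\varphi_m}J|) \leqslant h(|\varphi-\av{\varphi}J|) + h(|\varphi-\varphi_m|)$, hence
\[
\av{h(|\varphi_m-\av{\varphi_m}J|)}J \leqslant K^d_{h,Q}(\varphi) + \av{h(|\varphi-\varphi_m|)}J .
\]

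The heart of the matter is to show $\av{h(|\varphi-\varphi_m|)}J \leqslant h\big(\|\varphi-\varphi_m\|_{\BMO^d(Q)}\big)$. Here I would use concavity of $h$ together with Jensen's inequality: since $h$ is concave,
\[
\av{h(|\varphi-\varphi_m|)}J \leqslant h\big(\av{|\varphi-\varphi_m|}J\big),
\]
and then, since $h$ is increasing, it suffices to bound $\av{|\varphi-\varphi_m|}J$. Now the key observation is that the function $\psi \df \varphi - \varphi_m$ has mean zero on $J$ (because $\av{\varphi}J = \av{\varphi_m}J$), so by Cauchy--Schwarz $\av{|\psi|}J \leqslant (\av{\psi^2}J)^{1/2} = (\av{\psi^2}J - \av{\psi}J^2)^{1/2} \leqslant \|\psi\|_{\BMO^d(Q)} = \|\varphi-\varphi_m\|_{\BMO^d(Q)}$, the middle step using that $J$ is a dyadic cube of $Q$ and the square-oscillation is a sup over all such cubes. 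Combining the two displays and taking the supremum over all dyadic $J$ yields the claim.

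I expect the only subtle point to be the bookkeeping that $\varphi - \varphi_m$ is genuinely the relevant BMO object on $J$: one must note $\varphi_m$ truncates $\varphi$ at scale $2^{-mn}|Q|$, so on any $J\in D_i(Q)$ with $i<m$ the restriction $\varphi_m|_J$ is exactly $(\varphi|_J)_{m-i}$ — the dyadic truncation of $\varphi|_J$ of order $m-i$ — so $\varphi - \varphi_m$ restricted to $J$ has vanishing average over $J$ and, more generally, $\|\varphi-\varphi_m\|_{\BMO^d(Q)}$ controls its oscillation on every subcube. No additional hypotheses beyond those stated ($h$ increasing, concave, $h(0)=0$) are needed; concavity plus $h(0)=0$ already yields the triangle inequality~\eqref{e1} used in the first step, as remarked in the excerpt. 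Finally, one should record that $\|\varphi-\varphi_m\|_{\BMO^d(Q)}<\infty$ since $\varphi\in\BMO^d(Q)$ and $\|\varphi_m\|_{\BMO^d(Q)}\leqslant\|\varphi\|_{\BMO^d(Q)}$, so the right-hand side of the asserted inequality is meaningful.
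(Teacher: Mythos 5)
Your proof is correct and follows essentially the same route as the paper's: triangle inequality~\eqref{e1}, Jensen via concavity, then Cauchy--Schwarz to pass from the $L^1$-oscillation to the $L^2$-based $\BMO^d$ norm. The paper avoids your case split by writing $\varphi_m-\av{\varphi_m}J=(\varphi-\av{\varphi}J)-\bigl((\varphi-\varphi_m)-\av{\varphi-\varphi_m}J\bigr)$, valid for every $J\in D(Q)$, which reduces to your decomposition when $i<m$ because $\av{\varphi-\varphi_m}J=0$ there and is trivially fine when $i\geqslant m$.
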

\begin{proof}
Since $\varphi_m$ is a bounded function, we have $\varphi-\varphi_m\in\BMO^d(Q).$ Now, for any $J\in D(Q),$
\begin{align*}
\av{h(|\varphi_m-\av{\varphi_m}J|)}J&\leqslant \av{h(|\varphi-\av{\varphi}J|)}J+\av{h(|(\varphi-\varphi_m)-\av{\varphi-\varphi_m}J|)}J\\
&\leqslant \av{h(|\varphi-\av{\varphi}J|)}J+h(\av{|(\varphi-\varphi_m)-\av{\varphi-\varphi_m}J|}J)\\
&\leqslant \av{h(|\varphi-\av{\varphi}J|)}J+h\big(\|\varphi-\varphi_m\|_{\BMO^d(Q)}\big).
\end{align*}
Here we have used, in sequence: \eqref{e1}, the concavity of $h,$ and the fact that $h$ is increasing. Now, the rightmost side of this inequality is bounded by $K^d_{h,Q}(\varphi)+h\big(\|\varphi-\varphi_m\|_{\BMO^d(Q)}\big)$ and, taking the supremum over all $J$ on the left, we obtain the statement of the lemma.
\end{proof}
\begin{remark}
The argument just given actually shows that 
$$
\big|K^d_{h,Q}(f)-K^d_{h,Q}(g)\big|\leqslant h\big(\|f-g\|_{\BMO^d(Q)}\big)
$$
for any two functions $f$ and $g$ for which the quantities involved are finite. The same inequality holds with $K_{h,Q}$ and $\BMO(Q)$ in place of $K^d_{h,Q}$ and $\BMO^d(Q),$ respectively. 
\end{remark}

We would now like to show that for $\varphi\in\BMO^d(Q)$ the functional $K^d_{h,Q}(\varphi)$ admits a non-trivial estimate from below in terms of  $\|\varphi\|_{\BMO^d(Q)}.$ To that end, take $t\geqslant0$ and let
\eq[dom]{
\Omega_t=\{x=(x_1,x_2)\in\mathbb{R}^2\colon x_1^2\leqslant x_2\leqslant x_1^2+t^2\}
}
and for each $x\in\Omega_t,$ 
\eq[bel0]{
E_{x,t,Q}=\{\varphi\colon~\,\,\,\varphi\hbox{~is dyadic-simple on }Q, \av\varphi Q =x_1, \av{\varphi^2}Q=x_2, \|\varphi\|_{\BMO^d(Q)} \leqslant t\}.
} 
We now define the following lower Bellman function:
\eq[bel]{
\bel{B}^d_t(x_1,x_2)=\inf\{\av{h(|\varphi|)}Q\!\colon\,\, \varphi\in E_{x,t,Q}\}. 
}
Note that this function does not depend on the choice of $Q$. It is easy to show that $E_{x,t,Q}$ is non-empty for any $t\geqslant0$ and any $x\in\Omega_t.$
Let $A(t)=\bel{B}^d_t(0,t^2)$. The following lemma is a direct consequence of the definition of $A.$
\begin{lemma}
\label{l1}
For any cube $Q$ and any dyadic-simple function $\varphi$ on $Q,$
\begin{align}\label{ineq2}
A(\| \varphi\|_{\BMO^d(Q)})\leqslant K^d_{h,Q}(\varphi).
\end{align}
\end{lemma}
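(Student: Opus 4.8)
The plan is to exhibit $A(\|\varphi\|_{\BMO^d(Q)})$ as the value of the lower Bellman function $\bel{B}^d_t$ at a suitable boundary point of $\Omega_t$ manufactured directly from $\varphi$, after which the asserted inequality follows at once from the fact that $\bel{B}^d_t$ is defined as an infimum. Set $t=\|\varphi\|_{\BMO^d(Q)}$. If $t=0$, then $\varphi$ is constant on $Q$, so $A(0)=h(0)=0\leqslant K^d_{h,Q}(\varphi)$ and there is nothing to prove; assume $t>0$.

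First I would note that, since $\varphi$ is dyadic-simple, say $\varphi=\varphi_m$, the oscillation $\av{\varphi^2}J-\av{\varphi}J^2$ vanishes for every $J\in D(Q)$ of depth at least $m$ and takes only finitely many values for cubes of smaller depth; hence the supremum defining $\|\varphi\|_{\BMO^d(Q)}^2$ is attained at some $J^\ast\in D(Q)$, so that $\av{\varphi^2}{J^\ast}-\av{\varphi}{J^\ast}^2=t^2$. Next I would regard $J^\ast$ as the base cube and put $\psi=\varphi|_{J^\ast}-\av{\varphi}{J^\ast}$. This $\psi$ is dyadic-simple on $J^\ast$ (a shifted truncation of $\varphi|_{J^\ast}$), has $\av\psi{J^\ast}=0$ and $\av{\psi^2}{J^\ast}=\av{\varphi^2}{J^\ast}-\av{\varphi}{J^\ast}^2=t^2$, and, because $D(J^\ast)\subseteq D(Q)$, satisfies $\|\psi\|_{\BMO^d(J^\ast)}^2=\sup_{I\in D(J^\ast)}\big(\av{\varphi^2}I-\av{\varphi}I^2\big)\leqslant t^2$, with equality coming from $I=J^\ast$. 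Thus $\psi\in E_{(0,t^2),t,J^\ast}$ in the notation of \eqref{bel0}, and, since $\bel{B}^d_t$ does not depend on the base cube, its definition as an infimum over this class gives $A(t)=\bel{B}^d_t(0,t^2)\leqslant\av{h(|\psi|)}{J^\ast}=\av{h(|\varphi-\av{\varphi}{J^\ast}|)}{J^\ast}\leqslant K^d_{h,Q}(\varphi)$, the final inequality because $J^\ast\in D(Q)$. This is the claim.

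I do not anticipate a genuine obstacle: every step is forced by the definitions. The one point that must not be overlooked is that $\av{\varphi^2}Q-\av{\varphi}Q^2$ may be strictly smaller than $t^2$, so one cannot simply take $J=Q$; the argument has to be anchored at a subcube $J^\ast$ realizing the dyadic BMO supremum, and it is precisely the stated cube-independence of $\bel{B}^d_t$ that makes relocating the base cube to $J^\ast$ harmless.
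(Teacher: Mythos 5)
Your proposal is correct and follows essentially the same route as the paper: identify a dyadic subcube $J^\ast$ where the BMO oscillation is attained, shift $\varphi|_{J^\ast}$ by its average to land in the test class $E_{(0,t^2),t,J^\ast}$, and apply the infimum definition of $\bel{B}^d_t$. The only differences are cosmetic: you spell out why the supremum is attained and treat the degenerate case $t=0$ separately, both of which the paper leaves implicit.
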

\begin{proof}
Since $\varphi$ is dyadic-simple, it is in $\BMO^d(Q);$ let $t=\|\varphi\|_{\BMO^d(Q)}.$ Furthermore, there exists a dyadic subcube $J\subset Q$ such that $\av{(\varphi-\av{\varphi}J)^2}J=t^2$ and, therefore, $\|\varphi \|_{\BMO^d(J)}=t$. Let $\psi=\varphi|^{}_{J}-\av{\varphi}{J};$ then $\psi$ lies in the set $E_{(0,t^2),t,J}$~defined by~\eqref{bel0} (with $J$ in place of $Q$). Therefore,
 $$
 A(t)=\bel{B}^d_t(0,t^2)\leqslant \av{h(|\psi|)}J=\av{h(|\varphi-\av{\varphi}J|)}J\leqslant K^d_{h,Q}(\varphi).
 $$
\end{proof}
Our next result lies deeper; it is the key element in the proof of Theorem~\ref{t0}. Its proof is given in Section~\ref{Bellman_technique}. 
\begin{lemma}  
\label{bl}
If $h$ satisfies the conditions of Theorem~\ref{t0}\textup, then for any $t\ge0,$ 
\eq[main]{
A(t) \geqslant 2^{-(n+2)}h\big(2^{(n+2)/2} t\big).
}
\end{lemma}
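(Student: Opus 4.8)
\emph{Plan of proof.} The idea is to bound the lower Bellman function $\bel{B}^d_t$ from below, uniformly on $\Omega_t$, by an explicit \emph{sub-solution}, and then to read off~\eqref{main} by evaluating at the apex $(0,t^2)$. I would import from~\cite{bel_ref_staraya} a function $\B_t\colon\Omega_t\to[0,\infty)$ — whose derivation I will not reconstruct — and verify, for its closed form, three properties of $\B_t$: first, the \emph{obstacle condition} $\B_t(x_1,x_1^2)\le h(|x_1|)$ for every $x_1\in\mathbb{R}$; second, the \emph{main inequality}: whenever $x^{(1)},\dots,x^{(2^n)}\in\Omega_t$ and their barycenter $\bar x:=2^{-n}\sum_i x^{(i)}$ again lies in $\Omega_t$, one has $\B_t(\bar x)\le 2^{-n}\sum_i\B_t(x^{(i)})$; third, $\B_t(0,t^2)\ge 2^{-(n+2)}h\big(2^{(n+2)/2}t\big)$. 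The obstacle condition uses only $h(0)=0$ and monotonicity of $h$, and the third property is a substitution into the formula for $\B_t$; essentially all of the work, and the only place where the hypotheses $h'>0$, $h''<0$, $h'''>0$ are consumed, is the main inequality. For the explicit $\B_t$, which is affine along a suitable one-parameter family of chords of the lower parabola $x_2=x_1^2$ (the extremal trajectories of the problem), the standard reduction of the main inequality to an infinitesimal second-order condition unwinds to exactly those three sign conditions on $h$.

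Granting these properties, the core of the argument is an induction on the order of dyadic simplicity showing that
$$
\av{h(|\varphi|)}Q\ \ge\ \B_t\big(\av{\varphi}Q,\av{\varphi^2}Q\big)
$$
for every cube $Q$ and every dyadic-simple $\varphi$ on $Q$ with $\|\varphi\|_{\BMO^d(Q)}\le t$; here the point $x_Q:=(\av{\varphi}Q,\av{\varphi^2}Q)$ lies in $\Omega_t$ because $\av{\varphi^2}Q-\av{\varphi}Q^2\le t^2$, which is exactly the $\BMO^d$ bound at $Q$. For $\varphi=\varphi_0$ constant on $Q$ the inequality is just the obstacle condition. For the inductive step I would pass to the $2^n$ dyadic children $Q_1,\dots,Q_{2^n}$ of $Q$: on each $Q_i$ the restriction of $\varphi$ is dyadic-simple of one lower order and satisfies $\|\varphi\|_{\BMO^d(Q_i)}\le\|\varphi\|_{\BMO^d(Q)}\le t$, so the inductive hypothesis gives $\av{h(|\varphi|)}{Q_i}\ge\B_t(x_{Q_i})$ with $x_{Q_i}\in\Omega_t$. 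Since $x_Q=2^{-n}\sum_i x_{Q_i}$ and $x_Q\in\Omega_t$, the main inequality applies and yields
$$
\av{h(|\varphi|)}Q=\tfrac1{2^n}\sum_i\av{h(|\varphi|)}{Q_i}\ \ge\ \tfrac1{2^n}\sum_i\B_t(x_{Q_i})\ \ge\ \B_t(x_Q),
$$
completing the induction.

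To conclude, take any $\varphi\in E_{(0,t^2),t,Q}$. Then $x_Q=(0,t^2)$, so the induction together with the third property gives $\av{h(|\varphi|)}Q\ge\B_t(0,t^2)\ge 2^{-(n+2)}h(2^{(n+2)/2}t)$; passing to the infimum over such $\varphi$ yields $A(t)=\bel{B}^d_t(0,t^2)\ge 2^{-(n+2)}h(2^{(n+2)/2}t)$, which is~\eqref{main}. (When $t=0$ both sides equal $h(0)=0$.)

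The step I expect to be the main obstacle — and to occupy the bulk of Section~\ref{Bellman_technique} — is the main inequality. The difficulty is geometric: $\Omega_t$ is \emph{not} convex, since a chord joining two of its points can cross the upper parabola $x_2=x_1^2+t^2$, so outright convexity of $\B_t$ is neither available nor what is needed; what must be checked is the weaker, and genuinely delicate, $2^n$-point inequality for configurations whose barycenter returns to $\Omega_t$, and this is precisely where the shape of $\B_t$ and all three sign conditions on the derivatives of $h$ are used. A related point is that the main inequality really does require its full $2^n$-point form: the naive attempt to reduce it to repeated bisections breaks down because the intermediate boxes are rectangles, not cubes, and hence are not controlled by $\BMO^d$.
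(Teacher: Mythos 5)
Your overall architecture matches the paper's: import a sub-solution from~\cite{bel_ref_staraya}, verify an obstacle condition and a main inequality, run a Bellman induction over dyadic children, and evaluate at $(0,t^2)$. Your candidate $\B_t$ with $\B_t(0,t^2)=2^{-(n+2)}h(2^{(n+2)/2}t)$ is exactly the paper's $\GG_{2^{n/2}t}$ restricted to $\Omega_t$, and your base case (constants lie on the lower parabola, so the claim is the obstacle condition) and closing step are correct.

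The gap is in how you propose to establish the main inequality, and it is a real one. You claim the $2^n$-point barycenter inequality on $\Omega_t$ ``unwinds to exactly those three sign conditions on $h$'' via ``the standard reduction to an infinitesimal second-order condition.'' That reduction yields only \emph{local convexity} of $\GG_s$ on $\Omega_s$, a two-point statement along chords that stay in $\Omega_s$; it does \emph{not} yield your $2^n$-point inequality, because the relevant chords leave $\Omega_t$ (that is precisely the nonconvexity obstruction you identify yourself). Worse, you then explicitly dismiss the technique that fills this gap: you assert that bisection ``breaks down because the intermediate boxes are rectangles, not cubes, and hence are not controlled by $\BMO^d$.'' This misreads what is being bisected. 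The paper's Lemma~\ref{l6} bisects the \emph{collection of Bellman points}, not the cubes. The intermediate points $\bigl(\av{\varphi}{P},\av{\varphi^2}{P}\bigr)$, where $P$ is a union of dyadic subcubes, need not lie in $\Omega_t$ and indeed are not BMO-controlled; but they always lie above the lower parabola by Cauchy--Schwarz, and Lemma~\ref{l06} shows that whenever two such points have their midpoint in $\Omega_s$, the full chord lies in $\Omega_{\sqrt{2}\,s}$. Iterating this over $n$ bisections inflates the domain only to $\Omega_{2^{n/2}t}$, on which $\GG_{2^{n/2}t}$ is locally convex. So the $2^n$-point inequality you need for $\B_t=\GG_{2^{n/2}t}|_{\Omega_t}$ is obtained by repeated bisection together with the $\sqrt2$ domain enlargement, and this enlargement is exactly why the constant $2^{(n+2)/2}$ appears in~\eqref{main}. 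As written, your proposal has no working mechanism for the main inequality, and you have ruled out the correct one.
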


The next lemma is the equivalent of Theorem~\ref{t0} for $\BMO^d$ and $K^d_h.$
 \begin{lemma} \label{t2}
If $h$ satisfies the conditions of Theorem~\ref{t0} and $\varphi$ is such that $K^d_{h,Q}(\varphi)<\infty,$ then $ \varphi \in \BMO^d(Q)$ and 
\begin{align} \label{ineq3}
 2^{-(n+2)}h\big(2^{(n+2)/2} \| \varphi\|_{\BMO^d(Q)}\big) \leqslant K^d_{h,Q}(\varphi).
\end{align}
 \end{lemma}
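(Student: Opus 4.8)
The plan is to settle first the case of a dyadic‑simple $\varphi$, where \eqref{ineq3} falls out of Lemmas~\ref{l1} and~\ref{bl}, and then to bootstrap to an arbitrary $\varphi$ by dyadic truncation; the delicate point will be the final limiting step, which I expect to need the Bellman bound of Section~\ref{Bellman_technique} in a form slightly stronger than Lemma~\ref{bl} as stated.

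\textbf{Dyadic‑simple $\varphi$.} If $\varphi$ is dyadic‑simple on $Q$ then $\varphi\in\BMO^d(Q)$, so Lemma~\ref{l1} gives $A(\|\varphi\|_{\BMO^d(Q)})\le K^d_{h,Q}(\varphi)$; feeding this into $A(t)\ge 2^{-(n+2)}h(2^{(n+2)/2}t)$ (Lemma~\ref{bl}) with $t=\|\varphi\|_{\BMO^d(Q)}$ proves \eqref{ineq3}. Inverting the increasing function $h$, this reads $\|\varphi\|_{\BMO^d(Q)}\le 2^{-(n+2)/2}h^{-1}(2^{n+2}K^d_{h,Q}(\varphi))$.

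\textbf{Membership in $\BMO^d(Q)$.} For general $\varphi\in L^1(Q)$ with $K^d_{h,Q}(\varphi)<\infty$, I would apply the previous step to the truncations $\varphi_m$ and use $K^d_{h,Q}(\varphi_m)\le 2K^d_{h,Q}(\varphi)$ (Lemma~\ref{l01}): then $h(2^{(n+2)/2}\|\varphi_m\|_{\BMO^d(Q)})\le 2^{n+3}K^d_{h,Q}(\varphi)$ for all $m$, and since $h(t)\to\infty$ the numbers $\|\varphi_m\|_{\BMO^d(Q)}$ are bounded by some $C<\infty$. For fixed $J\in D(Q)$ one has $\av{\varphi_m}J=\av\varphi J$ and $\av{\varphi_m^2}J\uparrow\av{\varphi^2}J$ for large $m$, while $\varphi_m\to\varphi$ a.e., so Fatou's lemma gives $\av{\varphi^2}J-\av\varphi J^2=\lim_m\bigl(\av{\varphi_m^2}J-\av{\varphi_m}J^2\bigr)\le C^2$; taking the supremum over $J$ shows $\varphi\in\BMO^d(Q)$, and --- since also $\|\varphi_m\|_{\BMO^d(Q)}\le\|\varphi\|_{\BMO^d(Q)}$ --- that $\|\varphi_m\|_{\BMO^d(Q)}\to\|\varphi\|_{\BMO^d(Q)}$.

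\textbf{The sharp estimate --- the main obstacle.} What remains is to pass to the limit in $2^{-(n+2)}h(2^{(n+2)/2}\|\varphi_m\|_{\BMO^d(Q)})\le K^d_{h,Q}(\varphi_m)$ \emph{without degrading the constant}, and this is the hard part. Lemma~\ref{l01.5} is insufficient, because its error term $h(\|\varphi-\varphi_m\|_{\BMO^d(Q)})$ does not in general tend to $0$ (already for $\varphi=\log(1/x)$, truncation does not damp fine‑scale oscillation), while the crude bound of Lemma~\ref{l01} would only give \eqref{ineq3} with $2^{-(n+3)}$. The device I would use is that the proof of Lemma~\ref{bl} (Section~\ref{Bellman_technique}) actually produces, for each $t$, an explicit --- hence continuous --- sub‑solution $\B$ on the whole domain $\Omega_t$ with $\bel{B}^d_t\ge\B$ there and $\B(0,t^2)$ equal to the right‑hand side of \eqref{main}. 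Then, writing $t=\|\varphi\|_{\BMO^d(Q)}$: given $\varepsilon>0$, pick $J^*\in D(Q)$ with $\av{(\varphi-\av\varphi{J^*})^2}{J^*}>t^2-\varepsilon$, put $\psi=(\varphi-\av\varphi{J^*})|_{J^*}$ on $Q'=J^*$, and note that its dyadic‑simple truncations $\psi_k$ have $\av{\psi_k}{Q'}=0$, $\|\psi_k\|_{\BMO^d(Q')}\le t$, $\av{\psi_k^2}{Q'}\uparrow\av{\psi^2}{Q'}$, and $\av{h(|\psi_k|)}{Q'}\to\av{h(|\psi|)}{Q'}\le K^d_{h,Q'}(\psi)\le K^d_{h,Q}(\varphi)$ (dominated convergence, using $\psi\in L^2(Q')$). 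Since each $\psi_k$ is admissible in the infimum defining $\bel{B}^d_t(0,\av{\psi_k^2}{Q'})$, we get $\B(0,\av{\psi_k^2}{Q'})\le\av{h(|\psi_k|)}{Q'}$; letting $k\to\infty$ and then $\varepsilon\downarrow 0$, and using continuity of $\B$ at $(0,t^2)$, yields $\B(0,t^2)\le K^d_{h,Q}(\varphi)$, which is \eqref{ineq3}. The whole difficulty is thus concentrated in this last passage: one needs the estimate of Lemma~\ref{bl} on a full neighbourhood of $(0,t^2)$ in $\Omega_t$ --- obtained for free from the continuity of the explicit sub‑solution --- rather than at that one point only.
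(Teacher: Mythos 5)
Your proof is correct, and it in fact repairs a genuine gap in the paper's own argument for Lemma~\ref{t2}, located exactly where you anticipated difficulty. The paper's proof first establishes $\varphi\in\BMO^d(Q)$ with a spurious factor of $2$ via Lemmas~\ref{bl},~\ref{l1}, and~\ref{l01} (essentially your first two steps), then tries to remove that factor by invoking Lemma~\ref{l01.5},
\begin{equation*}
A(\|\varphi_k\|_{\BMO^d(Q)})\leqslant K^d_{h,Q}(\varphi_k)\leqslant K^d_{h,Q}(\varphi)+h\big(\|\varphi-\varphi_k\|_{\BMO^d(Q)}\big),
\end{equation*}
together with the assertion that ``it is easy to show that $\varphi_k\to\varphi$ in the $\BMO^d(Q)$ norm.'' As you note, that assertion is false in general. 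Your own counterexample $\varphi(x)=\log(1/x)$ on $Q=(0,1)$ is decisive: $\varphi_k$ is constant on the leftmost dyadic interval $(0,2^{-k})$, and an elementary computation gives $\av{\varphi^2}{(0,2^{-k})}-\av{\varphi}{(0,2^{-k})}^2=1$ for every $k$, hence $\|\varphi-\varphi_k\|_{\BMO^d(Q)}\geqslant1$ does not tend to $0$. So the error term in Lemma~\ref{l01.5} need not vanish and the paper's limiting step is not justified as written, even though the conclusion of Lemma~\ref{t2} is of course true.

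Your replacement is the right fix, and it uses only what the paper already supplies. Lemma~\ref{L8} gives $\bel{B}^d_t\geqslant\GG_{2^{n/2}t}$ on all of $\Omega_t$, not merely at the single point $(0,t^2)$, and $\GG_{2^{n/2}t}$ is an explicit continuous locally convex function. Restricting to a near-extremal dyadic subcube $J^*$, recentring $\psi=(\varphi-\av{\varphi}{J^*})|_{J^*}$, and testing the infimum in~\eqref{bel} against the truncations $\psi_k$ — which satisfy $\av{\psi_k^2}{J^*}\uparrow\av{\psi^2}{J^*}$ and, by dominated convergence using the dyadic maximal function bound $|\psi_k|\leqslant M\psi\in L^2\subset L^1$ and the sublinearity of the concave $h$, give $\av{h(|\psi_k|)}{J^*}\to\av{h(|\psi|)}{J^*}\leqslant K^d_{h,Q}(\varphi)$ — lets you pass to the limit along $x_1=0$ and then let $\varepsilon\downarrow0$, using only continuity of $\GG_{2^{n/2}t}$ at $(0,t^2)$. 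Everything you invoke beyond Lemma~\ref{bl} is already present in Section~\ref{Bellman_technique}, so your argument splices in cleanly. In short, your proposal is not merely a valid alternative but a necessary repair of the published proof of this lemma.
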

\begin{proof}
Setting $h(\infty)=\infty,$ and using, in order, the continuity of $h,$ Lemma~\ref{bl}, Lemma~\ref{l1}, and Lemma~\ref{l01}, we obtain
\begin{align*}
2^{-(n+2)}h(2^{(n+2)/2} \| \varphi\|_{\BMO^d(Q)})&=\lim\limits_{k \to \infty} 2^{-(n+2)}h(2^{(n+2)/2} \| \varphi_k\|_{\BMO^d(Q)}) \\
&\leqslant \limsup\limits_{k\to \infty} A(\|\varphi_k\|_{\BMO^d(Q)})\\
&\leqslant \limsup\limits_{k\to \infty}  K^d_{h,Q}(\varphi_k)\\
&\leqslant 2 K^d_{h,Q}(\varphi).  
\end{align*}
Therefore, $\varphi\in\BMO^d(Q),$ which immediately allows us to improve this estimate with the use of Lemma~\ref{l01.5}. We have
$$
A(\|\varphi_k\|_{\BMO^d(Q)})\le K^d_{h,Q}(\varphi_k)\le K^d_{h,Q}(\varphi)+h\big(\|\varphi-\varphi_k\|_{\BMO^d(Q)}\big).
$$
It is easy to show that $\varphi_k\to\varphi$ in the $\BMO^d(Q)$ norm, and so we can replace $2 K^d_{h,Q}(\varphi)$ above with $K^d_{h,Q}(\varphi).$
\end{proof}

We are now in the position to finish the proof of Theorem~\ref{t0}.
\begin{proof}[Proof of Theorem \ref{t0}]
 Take any subcube $J$ of $Q$. We have 
 $$
 K^d_{h,J}(\varphi)\leqslant K_{h,J}(\varphi)\leqslant K_{h,Q}(\varphi)<\infty,
 $$
 and so Lemma~\ref{t2} applies:
$$
2^{-(n+2)}h\Big(2^{(n+2)/2} \| \varphi\|_{\BMO^d(J)}\Big)\leqslant K^d_{h,J}(\varphi)\leqslant K_{h,Q}(\varphi).
$$
Taking supremum over all $J$ gives
$$
2^{-(n+2)}h\Big(2^{(n+2)/2} \| \varphi\|^{}_{\BMO(Q)}\Big)\leqslant K_{h,Q}(\varphi),
$$
which is equivalent to~\eqref{ms}.
\end{proof}
\section{Proof of Lemma~\ref{bl}}
\label{Bellman_technique}
To prove Lemma~\ref{bl}, we present a non-trivial {\it sub-solution} of the extremal problem~\eqref{bel}, i.e., a function $B$ on $\Omega_t$ such that 
\eq[u1]{
B(x)\leqslant\bel{B}^d_t(x), \quad\forall x\in\Omega_t,
}
and then show that
\eq[u2]{
B(0,t^2)= 2^{-(n+2)}h\big(2^{(n+2)/2} t\big).
}

Our sub-solution $B$ comes from the general Bellman function theory of integral estimates on BMO started in~\cite{svas} and developed further in~\cite{bel_ref_short,bel_ref_staraya}. To arrive at $B$, we first define a special family of functions that are locally convex on $\Omega_t$ (i.e., convex on every convex subset of $\Omega_t$) and then choose the largest element of that family for which we can establish~\eqref{u1} with minimal effort. The steps involved in constructing such locally convex or locally concave functions are beyond the scope of this paper; we refer the interested reader to~\cite{svas} and~\cite{bel_ref_staraya}. Here, we restrict ourselves to the simple verification of the fact that our chosen $B$ possesses properties~\eqref{u1} and~\eqref{u2}.  

We start by defining a function $\GG_t$ on the set  $\{x_1\geqslant0\}\cap \Omega_t$ as follows:
\begin{equation}\label{defG}
\GG_t(x_1,x_2)=
\begin{cases}
\frac{x_1^2}{x_2}h(\frac{x_2}{x_1}),& x_2< 2t x_1,\\
\\
\frac{x_2}{4t^2}h(2t),&  x_1\leqslant t, \, x_2 \geqslant 2tx_1,\\
\\
h(u)+(x_1-u)m(u),&  x_1 \geqslant t, \, x_2 \geqslant 2tx_1,
\end{cases}
\end{equation} 
where $u=u(x_1,x_2)=x_1+t-\sqrt{t^2-x_2+x_1^2}$, and the function
$m$ is the unique solution of the following Cauchy problem
$$
tm'(u)+m(u)=h'(u),  \qquad m(2t)=\frac{h(2t)}{2t}.
$$
We extend $\GG_t$ to all of $\Omega_t$ by symmetry: 
\eq[defG1]{
\GG_t(x_1,x_2)=\GG_t(-x_1,x_2)~\text{for}~x_1<0.
}

The reader can verify by direct calculation that $\GG_t$ defined by~\eqref{defG} and~\eqref{defG1} is locally convex on $\Omega_t$ and that $\GG_t(x_1,x_1^2)=h(|x_1|)$. This verification is somewhat harder in the part of $\Omega_t$ where $\GG_t$ is defined using the function $m.$ The reader is invited to consult Section~3.1 of~\cite{bel_ref_staraya} on how to deal with such difficulties. 

We omit the trivial proof of the following lemma.
\begin{lemma}
\label{l06}
Let $U=(u_1,u_2)$ and $V=(v_1,v_2)$ be two points in $\mathbb{R}^2$ such that $u_2\geqslant u_1^2$ and $v_2 \geqslant v_1^2$. Suppose that the midpoint $(U+V)/2$ lies in $\Omega_t$ for some $t>0$. Then the whole segment $[U,V]$ lies in $\Omega_{\sqrt{2}\,t}$. Thus\textup, if a function $G$ is locally convex on $\Omega_{\sqrt{2}\,t}$\textup, then 
$$
G\Big(\frac{U+V}2\Big) \leqslant \frac12G(U)+\frac12G(V).
$$
\end{lemma}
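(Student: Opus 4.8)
The plan is to establish the geometric claim about the segment $[U,V]$ first; once that is in place, the convexity inequality is immediate from the definition of local convexity (a function locally convex on $\Omega_{\sqrt2\,t}$ is convex on any convex subset of $\Omega_{\sqrt2\,t}$, and a segment is convex). So the real content is: if $u_2\ge u_1^2$, $v_2\ge v_1^2$, and $M:=(U+V)/2\in\Omega_t$, then $[U,V]\subset\Omega_{\sqrt2\,t}$. Write a generic point of the segment as $P_\lambda=(1-\lambda)U+\lambda V$ for $\lambda\in[0,1]$, with coordinates $p_1(\lambda),p_2(\lambda)$. I need two inequalities for every $\lambda$: the lower parabolic bound $p_2(\lambda)\ge p_1(\lambda)^2$ and the upper bound $p_2(\lambda)\le p_1(\lambda)^2+2t^2$.

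For the lower bound, note that $p_1(\lambda)^2$ is a convex function of $\lambda$ while $p_2(\lambda)$ is affine (linear) in $\lambda$; hence $g(\lambda):=p_2(\lambda)-p_1(\lambda)^2$ is concave in $\lambda$. At the endpoints $g(0)=u_2-u_1^2\ge0$ and $g(1)=v_2-v_1^2\ge0$, so by concavity $g(\lambda)\ge\min\{g(0),g(1)\}\ge0$ for all $\lambda\in[0,1]$ — in fact the linear interpolant of the endpoint values is a lower bound for a concave function only in the wrong direction, so the cleanest route is: a concave function on $[0,1]$ that is nonnegative at both endpoints is nonnegative on all of $[0,1]$. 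That gives $p_2(\lambda)\ge p_1(\lambda)^2$.

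For the upper bound, consider $f(\lambda):=p_1(\lambda)^2+2t^2-p_2(\lambda)$. Since $p_2$ is affine and $p_1^2$ is convex, $f$ is convex in $\lambda$, so it attains its maximum on $[0,1]$ at an endpoint: $f(\lambda)\le\max\{f(0),f(1)\}$. It therefore suffices to bound $f(0)=u_2-u_1^2$ wait — I want $f\ge0$, i.e. I need a \emph{lower} bound on $f$, so convexity is the wrong sign and I should instead argue pointwise. The efficient argument is to use the midpoint hypothesis directly on the quantity $w(\lambda):=p_1(\lambda)^2-p_2(\lambda)$, which satisfies $w(\tfrac12)\ge -t^2$ (since $M\in\Omega_t$). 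Because $w$ is convex in $\lambda$, for the midpoint we have $w(\tfrac12)\le\tfrac12 w(0)+\tfrac12 w(1)$; but this again points the wrong way. The correct observation: $w$ convex plus $w(0)=u_1^2-u_2\le 0$, $w(1)=v_1^2-v_2\le0$, together with $w(\tfrac12)\ge -t^2$, and I want $w(\lambda)\le t^2$ on $[0,1]$. Here is the clean finish. For a convex $w$ on $[0,1]$ with $w(\tfrac12)\ge -t^2$ and $w(0),w(1)\le 0$: on $[0,\tfrac12]$, convexity gives $w(0)$ lies below the chord through $\big(\tfrac12,w(\tfrac12)\big)$ extended to $0$? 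The robust and genuinely correct statement is that $w$ convex with $w(0)\le0$ forces, for $\lambda\in[0,\tfrac12]$, $w(\lambda)\le\max\{w(0),w(\tfrac12)\}$ along the chord, hence $w(\lambda)\le 2w(\tfrac12)-w(1)$ when one extrapolates — I will pin down the exact chord estimate in the write-up. The point I want to make in this plan is that the upper parabolic bound $p_2(\lambda)\le p_1(\lambda)^2+2t^2$ follows from: $p_2-p_1^2$ is concave in $\lambda$, equals a value $\ge -t^2$ at the midpoint, and is $\le$ (endpoint values $\le 0$)... so a concave function that is $\le0$ at both endpoints and $\ge-t^2$ at the midpoint is $\ge -2t^2$ on all of $[0,1]$, which is exactly $p_2(\lambda)-p_1(\lambda)^2\ge-2t^2$, i.e. the desired bound.

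The main obstacle is purely bookkeeping: keeping straight which of the two functions ($p_2-p_1^2$ versus $p_1^2-p_2$) is convex and which inequality (lower parabola, upper parabola) needs an upper versus lower pointwise bound, so that the one-dimensional convexity/concavity lemma is applied in the correct direction each time. Once the segment is shown to lie in $\Omega_{\sqrt2\,t}$, I invoke the hypothesis that $G$ is locally convex on $\Omega_{\sqrt2\,t}$: the segment $[U,V]$ is a convex subset of $\Omega_{\sqrt2\,t}$, so $G$ is convex on it, and convexity applied to the midpoint gives $G\big((U+V)/2\big)\le\tfrac12 G(U)+\tfrac12 G(V)$, completing the proof.
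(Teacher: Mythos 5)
Your lower-bound argument is correct: $g(\lambda):=p_2(\lambda)-p_1(\lambda)^2$ is concave (affine minus convex), $g(0)=u_2-u_1^2\geqslant 0$ and $g(1)=v_2-v_1^2\geqslant 0$, and a concave function nonnegative at both endpoints of $[0,1]$ is nonnegative on $[0,1]$. (Note also that the paper simply omits the proof as trivial, so there is no ``paper proof'' to compare against.)

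However, your upper-bound argument, as finally stated, has the inequality in the wrong direction and the endpoint signs wrong. You want $p_2(\lambda)\leqslant p_1(\lambda)^2+2t^2$, i.e.\ $g(\lambda)\leqslant 2t^2$; you instead declare ``$p_2(\lambda)-p_1(\lambda)^2\geqslant -2t^2$, i.e.\ the desired bound,'' which is not the desired bound (it is weaker than the $g\geqslant 0$ you already proved). You also assert that $g$ is ``$\leqslant 0$ at both endpoints,'' but by hypothesis $g(0),g(1)\geqslant 0$. The abstract concavity lemma you invoke therefore does not match either the premises or the target conclusion. The correct and short finish, using exactly the concavity of $g$ you identified, is this: for every $\lambda\in[0,1]$, the point $\tfrac12$ is the midpoint of $\lambda$ and $1-\lambda$, so concavity gives
\[
g\bigl(\tfrac12\bigr)\geqslant \tfrac12 g(\lambda)+\tfrac12 g(1-\lambda),
\quad\text{hence}\quad
g(\lambda)\leqslant 2g\bigl(\tfrac12\bigr)-g(1-\lambda).
\]
Since the midpoint lies in $\Omega_t$ we have $g(\tfrac12)\leqslant t^2$, and by your lower-bound step $g(1-\lambda)\geqslant 0$; therefore $g(\lambda)\leqslant 2t^2$, which is the needed upper bound. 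You should replace the confused discussion of $w=p_1^2-p_2$, the misstated endpoint signs, and the incorrect final sentence with this two-line estimate. The rest of the plan (reduction to the two parabolic inequalities and the concluding application of local convexity of $G$ to the segment) is fine.
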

Our next result follows by a repeated application of Lemma~\ref{l06} to points formed by averages of a BMO function. In Bellman-function contexts this kind of argument is often referred to as ``Bellman Induction.''
\begin{lemma}
\label{l6}
Let $Q$ be a cube in $\mathbb{R}^n.$ Suppose $\varphi\in\BMO^d(Q)$ and $\|\varphi\|_{\BMO^d(Q)}\leqslant t.$ Let $G$ be a locally convex function on $\Omega_{2^{n/2} t}.$ Then\textup, for any $k\geqslant0,$
$$
G\big(\av{\varphi}Q,\av{\varphi^2}Q\big)\leqslant\sum_{J\in D_k(Q)} 2^{-nk}\,G\big(\av{\varphi}J,\av{\varphi^2}J\big).
$$ 
\end{lemma}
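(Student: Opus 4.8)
The plan is to prove the inequality by induction on $k$, using Lemma~\ref{l06} as the one-step engine. The base case $k=0$ is trivial since the sum reduces to the single term $G(\av{\varphi}Q, \av{\varphi^2}Q)$. For the inductive step, I would first establish the $k=1$ case and then iterate. So suppose $\|\varphi\|_{\BMO^d(Q)}\le t$ and let $Q_1,\dots,Q_{2^n}$ be the $2^n$ dyadic children of $Q$. Writing $x=(\av{\varphi}Q,\av{\varphi^2}Q)$ and $x^{(i)}=(\av{\varphi}{Q_i},\av{\varphi^2}{Q_i})$, the key observation is that $x$ is the average of the points $x^{(i)}$: indeed $\av{\varphi}Q = 2^{-n}\sum_i \av{\varphi}{Q_i}$ and $\av{\varphi^2}Q = 2^{-n}\sum_i \av{\varphi^2}{Q_i}$. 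I want to conclude $G(x)\le 2^{-n}\sum_i G(x^{(i)})$, i.e.\ a Jensen-type inequality for the center of mass of $2^n$ points. Since $G$ is only \emph{locally} convex, I cannot apply Jensen directly; instead I would pair up the children two at a time, forming a binary tree of $n$ levels of midpoint-averaging, and invoke Lemma~\ref{l06} at each node, taking care that at the $j$-th level from the bottom the relevant midpoints lie in $\Omega_{2^{j/2}t}$ so that the hypothesis ``$G$ locally convex on $\Omega_{2^{n/2}t}$'' suffices at every node.

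More concretely, each $x^{(i)}$ satisfies $x^{(i)}_2\ge (x^{(i)}_1)^2$ (Cauchy--Schwarz), and since $\|\varphi\|_{\BMO^d(Q)}\le t$ every node-average $x^{(i)}$ in fact lies in $\Omega_t$: the second coordinate minus the square of the first is exactly the variance over that subcube, which is at most $t^2$. Now pair $x^{(1)}$ with $x^{(2)}$, etc.; by Lemma~\ref{l06} (with the roles $U=x^{(1)}$, $V=x^{(2)}$, whose midpoint lies in $\Omega_t$) the midpoint $y^{(1)}=(x^{(1)}+x^{(2)})/2$ satisfies $G(y^{(1)})\le \tfrac12 G(x^{(1)})+\tfrac12 G(x^{(2)})$, and moreover $y^{(1)}\in\Omega_{\sqrt 2\,t}$ with $y^{(1)}_2\ge (y^{(1)}_1)^2$. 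Repeating, at level $j$ we have points in $\Omega_{2^{j/2}t}$ lying above the parabola, whose pairwise midpoints at level $j+1$ land in $\Omega_{2^{(j+1)/2}t}\subseteq\Omega_{2^{n/2}t}$, where $G$ is locally convex; after $n$ levels we reach the single point $2^{-n}\sum_i x^{(i)}=x$ and have accumulated $G(x)\le 2^{-n}\sum_i G(x^{(i)})$. This proves the $k=1$ statement.

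For general $k$, I would induct: apply the $k=1$ bound to $Q$, then apply the inductive hypothesis (with $k-1$ in place of $k$) to each child $Q_i$ in place of $Q$ — noting $\|\varphi\|_{\BMO^d(Q_i)}\le\|\varphi\|_{\BMO^d(Q)}\le t$, so the hypothesis on $G$ carries over verbatim — and combine:
$$
G(\av{\varphi}Q,\av{\varphi^2}Q)\le 2^{-n}\sum_{i=1}^{2^n} G(\av{\varphi}{Q_i},\av{\varphi^2}{Q_i})\le 2^{-n}\sum_{i=1}^{2^n} \sum_{J\in D_{k-1}(Q_i)} 2^{-n(k-1)} G(\av{\varphi}J,\av{\varphi^2}J),
$$
and since $\bigcup_i D_{k-1}(Q_i)=D_k(Q)$ this is exactly $\sum_{J\in D_k(Q)}2^{-nk}G(\av{\varphi}J,\av{\varphi^2}J)$, completing the induction.

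The main obstacle is the bookkeeping in the $k=1$ step: one must check that at every node of the pairing tree the ``midpoint in $\Omega_t$'' hypothesis of Lemma~\ref{l06} is met, which requires tracking both that intermediate averages stay above the parabola (automatic from convexity of $s\mapsto s^2$) and that the variance-type defect $x_2-x_1^2$ stays controlled by the right power of $2$ times $t^2$. A cleaner alternative, which I would actually prefer to write up, is to avoid the explicit tree entirely: observe that the set $\{(x_1,x_2): x_2\ge x_1^2,\ x_2-x_1^2\le 2^{n/2}t \cdot(\text{something})\}$ — more simply, note that all $2^n$ points $x^{(i)}$ together with all partial midpoints lie in the fixed convex-friendly region $\Omega_{2^{n/2}t}$, on which $G$ is locally convex, and that $\Omega_{2^{n/2}t}$ is enough room for the entire recombination — so the repeated two-point inequality of Lemma~\ref{l06} telescopes without further checks. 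Either way, no genuinely new idea is needed beyond iterating Lemma~\ref{l06}; the lemma is the routine ``Bellman induction'' that its statement advertises.
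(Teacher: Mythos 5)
There is a genuine gap in your $k=1$ step, and it stems from reading Lemma~\ref{l06} in the wrong direction. Lemma~\ref{l06} says: \emph{if the midpoint} $(U+V)/2$ \emph{lies in} $\Omega_\tau$, then the whole segment $[U,V]$ lies in $\Omega_{\sqrt2\,\tau}$ (and then local convexity of $G$ on $\Omega_{\sqrt2\,\tau}$ gives the two-point Jensen inequality). It does \emph{not} say that if the endpoints lie in $\Omega_\tau$ then their midpoint lies in $\Omega_{\sqrt 2\tau}$; that converse is simply false. Concretely, take $x^{(1)}=(-a,a^2+t^2)$ and $x^{(2)}=(a,a^2+t^2)$, both on the upper boundary of $\Omega_t$; their midpoint is $(0,a^2+t^2)$, which sits a distance $a^2+t^2$ above the parabola and escapes $\Omega_t$ the moment $a\neq 0$. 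So your parenthetical claim ``whose midpoint lies in $\Omega_t$'' in the first pairing is unjustified, and the subsequent recursion ``level $j$ in $\Omega_{2^{j/2}t}\Rightarrow$ level $j{+}1$ in $\Omega_{2^{(j+1)/2}t}$'' never follows from Lemma~\ref{l06}, which only propagates control \emph{outward from the midpoint}, not inward from the endpoints. Your ``cleaner alternative'' has the same problem: asserting that all $x^{(i)}$ and all partial midpoints lie in $\Omega_{2^{n/2}t}$ is true but not enough, because $\Omega_{2^{n/2}t}$ is not convex, so you still must check, at every node, that the hypothesis of Lemma~\ref{l06} is met (i.e., that the midpoint sits in an $\Omega_\tau$ with $\sqrt2\,\tau\le 2^{n/2}t$), which you never establish.

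The paper's proof avoids all of this by running the tree top-down. Start at the root: $(\av{\varphi}Q,\av{\varphi^2}Q)\in\Omega_t$ is known from the BMO hypothesis. Split $Q$ into two half-unions $P_1,P_2$; the midpoint of the two $P_i$-points is exactly the root point, which is in $\Omega_t$, so Lemma~\ref{l06} (applied in its stated direction) puts the $P_i$-points in $\Omega_{\sqrt2\,t}$ and gives the one-step Jensen bound. Now each $P_i$-point is known to be in $\Omega_{\sqrt 2 t}$, so the next split lands in $\Omega_{2t}$, and so on; after $n$ halvings the children of $Q$ are reached with the guarantee that every intermediate segment stayed inside $\Omega_{2^{n/2}t}$, exactly where $G$ is assumed locally convex. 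That is the mechanism that makes ``Bellman induction'' run without extra bookkeeping. Your bottom-up version can in principle be salvaged — by the law of total variance, a union of $2^j$ of the $2^n$ children has oscillation at most $2^{n-j}t^2$, which bounds the level-$j$ midpoint in $\Omega_{2^{(n-j)/2}t}$, a parameter that \emph{decreases} (not increases, as you claim) as one climbs the tree, and $\sqrt 2\cdot 2^{(n-j)/2}t\le 2^{n/2}t$ for $j\ge1$ — but that is a separate argument you would need to supply, and as written the proposal does not contain it. The general-$k$ induction in your last paragraph is fine once the $k=1$ step is repaired.
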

\begin{proof}
Let $J_1, J_2,\dots, J_{2^n}$ be an enumeration of the elements of $D_1(Q).$ Let 
$$
P_1=\bigcup_{j=1}^{2^{n-1}} J_j,\quad P_2=\bigcup_{j=2^{n-1}+1}^{2^n} J_j.
$$
Since $\|\varphi\|_{\BMO^d(Q)}\leqslant t,$ we have $(\av{\varphi}Q,\av{\varphi^2}Q)\in\Omega_t.$ Lemma~\ref{l06} now implies that both points $\big(\av{\varphi}{P^{}_1},\av{\varphi^2}{P^{}_1}\big)$ and $\big(\av{\varphi}{P^{}_2},\av{\varphi^2}{P^{}_2}\big)$ are in $\Omega_{\sqrt 2\,t}.$ Since $G$ is locally convex on $\Omega_{2^{n/2} t}$, it is also locally convex on $\Omega_{\sqrt2\,t}$ and thus
$$
G\big(\av{\varphi}Q,\av{\varphi^2}Q\big)\leqslant \frac12 G\big(\av{\varphi}{P^{}_1},\av{\varphi^2}{P^{}_1}\big)+ \frac12 G\big(\av{\varphi}{P^{}_2},\av{\varphi^2}{P^{}_2}\big).
$$
Now, write $P_1=R_1\cup R_2$ and $P_2=R_3\cup R_4,$ where each $R_i$ is a union of $2^{n-2}$ elements of $D_1(Q).$ By Lemma~\ref{l06}, we have $(\av{\varphi}{R_i},\av{\varphi^2}{R_i})\in\Omega_{2 t}$ and since $G$ is locally convex on $\Omega_{2t}$, we get
$$
G\big(\av{\varphi}Q,\av{\varphi^2}Q\big)\leqslant \frac14\sum_{i=1}^4 G\big(\av{\varphi}{R_i},\av{\varphi^2}{R_i}\big).
$$
Continuing in this fashion we conclude that 
\eq[z1]{
G\big(\av{\varphi}Q,\av{\varphi^2}Q\big)\leqslant \sum_{J\in D_1(Q)} \!\!\!2^{-n}\,G\big(\av{\varphi}J,\av{\varphi^2}J\big).
}
Since $\|\varphi\|_{\BMO^d(Q)}\leqslant t,$ each point $(\av{\varphi}J,\av{\varphi^2}J)$ is in $\Omega_t$, and so we can apply~\eqref{z1} again:
$$
G\big(\av{\varphi}Q,\av{\varphi^2}Q\big)\leqslant \sum_{J\in D_1(Q)} \!\!\!2^{-n}\sum_{L\in D_1(J)}\!\!\!2^{-n}\,G\big(\av{\varphi}L,\av{\varphi^2}L\big)=\sum_{J\in D_2(Q)}\!\!\!2^{-2n}\,G\big(\av{\varphi}J,\av{\varphi^2}J\big).
$$
Repeating this process $k-2$ more times yields the statement of the lemma.
\end{proof}

The final step of the proof now follows.
\begin{lemma}\label{L8}
For any $x \in \Omega_t$ the inequality
$
\bel{B}^d_t(x) \geqslant \GG_{2^{n/2} t}(x)
$
holds.
\end{lemma}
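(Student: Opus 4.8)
The plan is to unwind the definition of $\bel{B}^d_t$ and apply Bellman induction (Lemma~\ref{l6}), exploiting the fact that a dyadic-simple function is constant on all sufficiently fine dyadic subcubes, where $\GG$ reduces to $h$ on the lower boundary parabola.

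Fix $x\in\Omega_t$ and an arbitrary $\varphi\in E_{x,t,Q}$. Since $\varphi$ is dyadic-simple, $\varphi=\varphi_m$ for some integer $m\ge0$; moreover $\|\varphi\|_{\BMO^d(Q)}\le t$, so $(\av{\varphi}Q,\av{\varphi^2}Q)=(x_1,x_2)\in\Omega_t\subset\Omega_{2^{n/2}t}$. By the local convexity property stated after~\eqref{defG1} (applied with $t$ replaced by $2^{n/2}t$), the function $\GG_{2^{n/2}t}$ is locally convex on $\Omega_{2^{n/2}t}$, so Lemma~\ref{l6} applies with $G=\GG_{2^{n/2}t}$ and $k=m$:
\[
\GG_{2^{n/2}t}(x_1,x_2)\le\sum_{J\in D_m(Q)}2^{-nm}\,\GG_{2^{n/2}t}\big(\av{\varphi}J,\av{\varphi^2}J\big).
\]

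The key observation is that for every $J\in D_m(Q)$ the function $\varphi=\varphi_m$ is constant on $J$, equal to $\av{\varphi}J$, so $\av{\varphi^2}J=\av{\varphi}J^2$: each point $(\av{\varphi}J,\av{\varphi^2}J)$ lies on the parabola $x_2=x_1^2$, where $\GG_{2^{n/2}t}(x_1,x_1^2)=h(|x_1|)$ (this identity, valid for $x_1\ge0$ by~\eqref{defG}, extends to $x_1<0$ by the symmetry~\eqref{defG1}). Substituting, the right-hand side equals $\sum_{J\in D_m(Q)}2^{-nm}h(|\av{\varphi}J|)=\av{h(|\varphi|)}Q$. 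Hence $\GG_{2^{n/2}t}(x)\le\av{h(|\varphi|)}Q$ for every $\varphi\in E_{x,t,Q}$, and taking the infimum over all such $\varphi$ yields $\GG_{2^{n/2}t}(x)\le\bel{B}^d_t(x)$, as claimed.

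There is essentially no remaining obstacle at this stage: the substantive work --- the local convexity of $\GG_t$ on $\Omega_t$, including the delicate region described by the function $m$ --- has been asserted (and referenced to~\cite{bel_ref_staraya}) before the statement of this lemma, and the dilation of scale from $t$ to $2^{n/2}t$, which accounts for the $n$ successive binary bisections needed to pass from a cube in $\mathbb{R}^n$ to its dyadic children, is exactly what Lemma~\ref{l6} is designed to provide. The only points requiring care are bookkeeping ones: invoking $\GG$ at the inflated scale $2^{n/2}t$ rather than $t$, and using the boundary identity $\GG_s(x_1,x_1^2)=h(|x_1|)$ together with the symmetric extension so that negative averages $\av{\varphi}J$ cause no difficulty.
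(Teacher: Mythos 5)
Your proof is correct and follows essentially the same approach as the paper's: fix a dyadic-simple $\varphi\in E_{x,t,Q}$, choose $k$ (your $m$) so that $\varphi$ is constant on the cubes of $D_k(Q)$, apply Lemma~\ref{l6} to the locally convex $\GG_{2^{n/2}t}$, reduce via the boundary identity $\GG_{2^{n/2}t}(x_1,x_1^2)=h(|x_1|)$ to obtain $\GG_{2^{n/2}t}(x)\leqslant\av{h(|\varphi|)}Q$, and pass to the infimum. The only difference is a matter of emphasis, not substance; in particular your remark that the symmetric extension \eqref{defG1} is what makes negative averages harmless is a careful touch the paper leaves implicit.
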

\begin{proof}
The function $\GG_{2^{n/2} t}$ is locally convex on $\Omega_{2^{n/2} t}$ by construction. Fix any $x\in \Omega_t$ and any dyadic-simple function $\phi$ on a cube $Q$ such that $\|\varphi\|_{\BMO^d(Q)}\leqslant t$ and $(\av{\phi}{Q},\av{\phi^2}{Q})=x$. For $k$ large enough $\varphi$ is constant on each cube $J\in D_k(Q)$, i.e., $\phi=\av{\varphi}J$ on $J$. Using Lemma~\ref{l6} and the boundary condition $\GG_{2^{n/2}t}(x_1,x_1^2)=h(|x_1|),$ we obtain
\begin{align*}
\GG_{2^{n/2}t}(x)&\leqslant \sum_{J\in D_k(Q)}\!\!\!2^{-nk}\,\GG_{2^{n/2}t}\big(\av{\phi}J,\av{\phi^2}J\big)\\
&=\sum_{J\in D_k(Q)}\!\!\!2^{-nk}\, \GG_{2^{n/2}t}\big(\av{\phi}J,\av{\phi}J^2\big)=
\sum_{J\in D_k(Q)}\!\!\!2^{-nk}\, h(|\av{\varphi}J|)=\av{h(|\varphi|)}Q.
\end{align*}
Taking the infimum over all such $\varphi,$ which is precisely the infimum in the definition of $\bel{B}_t^d,$ finishes the proof.
\end{proof}

Formula~\eqref{defG} gives $\GG_t(0,x_2)=\frac{x_2}{4t^2}\,h(2t)$ and so
$$
\GG_{2^{n/2} t}(0,t^2)=2^{-n-2}\,h(2^{(n+2)/2}\, t).
$$
Setting $B(x)=\GG_{2^{n/2} t}(x),~x\in\Omega_t,$ we see that both~\eqref{u1} and~\eqref{u2} are satisfied.
The proof of Lemma~\ref{bl} is now complete.\hfill $\qed$

\section{Proof of Theorem~\ref{t0.5}}
\label{pt0.5}
\begin{proof}
In dimension 1, we do not need to resort to dyadic splits when running the Bellman induction argument, as we did in Lemma~\ref{L8} above. Instead, Lemmas~6.2 and~6.4 of~\cite{svas} show how one can construct a quasi-dyadic system of subintervals of $Q$ and perform the induction without having to enlarge the domain $\Omega_t$ by a factor of $\sqrt2$ on every split.

Take any interval $Q$ and a function $\varphi$ on $Q$ such that $\kf<\infty.$ By Theorem~\ref{t0}, we have $\varphi\in\BMO(Q).$ Take $M>0$ and let $\varphi_M$ be the cut-off of $\varphi$ at height~$M:$ 
$
\varphi^{}_M=\varphi\,\chi^{}_{\{|\varphi|\le M\}}+M\,\chi^{}_{\{\varphi>M\}}-M\,\chi^{}_{\{\varphi<-M\}}.
$

Now recall the locally convex function $\GG_t$ defined on $\Omega_t$ by~\eqref{defG} and~\eqref{defG1}. Let $t=\|\varphi^{}_M\|^{}_{\BMO(Q)}.$ By Lemma~6.4 of~\cite{svas}, for any subinterval $J$ of $Q$ we have
$$
\GG_t\big(0,\av{\varphi^2_M}J-\av{\varphi^{}_M}J^2\big)\leqslant \av{h(|\varphi^{}_M-\av{\varphi^{}_M}J|)}J,
$$
or, using~\eqref{defG},
\eq[u3.1]{
\frac{\av{\varphi^2_M}J-\av{\varphi^{}_M}J^2}{4\|\varphi^{}_M\|^2_{\BMO(Q)}}\,h(2\|\varphi^{}_M\|^{}_{\BMO(Q)})\leqslant \av{h(|\varphi^{}_M-\av{\varphi^{}_M}J|)}J.
}
Arguing as in Lemma~\ref{l01.5}, we write
$$
\av{h(|\varphi^{}_M-\av{\varphi^{}_M}J|)}J \leqslant \av{h(|\varphi-\av{\varphi}J|)}J+h\big(\|\varphi-\varphi^{}_M\|^{}_{\BMO^d(Q)}\big).
$$

It is easy to show that $\av{\varphi^{}_M}J,$ $\av{\varphi^2_M}J,$ $\|\varphi^{}_M\|_{\BMO(Q)},$ and $\|\varphi-\varphi^{}_M\|_{\BMO(Q)}$ converge, respectively, to $\av{\varphi}J,$ $\av{\varphi^2}J,$ $\|\varphi\|_{\BMO(Q)},$ and $0,$ as $M\to\infty.$ Taking the limit, we obtain~\eqref{uu2}. To prove that this inequality is sharp, we fix an interval $Q$ and $t>0$ and present a function $\varphi\in\BMO(Q)$ with $\|\varphi\|_{\BMO(Q)}=t$ for which the inequality become equality. Without loss of generality, we can set $Q=[0,1].$ Consider the following function, which was constructed in~\cite{svas} to show sharpness in a similar situation:
$$
\varphi(s)=
\begin{cases}
-2t,&s\in\big[0,\frac18\big]\cup\big[\frac78,1\big],\\
0,&s\in\big(\frac18,\frac78\big),\\
2t,&s\in\big[\frac78,1\big].
\end{cases}
$$
We have $\|\varphi\|^{}_{\BMO(Q)}=t,$ $\av{\varphi}Q=0,$ $\av{\varphi^2}Q=t,$ and $\av{h(|\varphi-\av{\varphi}Q|)}Q=h(2t)/4,$ and thus both sides of~\eqref{uu2} are equal. 

To prove the right-hand inequality in~\eqref{uu3}, bound the right-hand side of~\eqref{uu2} by $\kf,$ take the supremum over all $J,$ and invert $h.$ 
\end{proof}

\section{Proof of Theorem~\ref{t1}}
\label{pt1}
\begin{proof}
The idea is very simple: we construct a function $\tilde{h}$ on $[0,\infty)$ such that $\tilde{h}$ satisfies the conditions of Theorem~\ref{t0} and, in addition, $\tilde{h}(x)\leqslant h(x)+C$ for all $x$ and some constant $C.$ Then
$$
K_{\tilde h,Q}(\varphi)<K_{h,Q}(\varphi)+C<\infty
$$ 
and so $\varphi\in\BMO(Q)$ by Theorem~\ref{t0}. The following lemma presents the construction.
\begin{lemma} \label{red1}
Let a function $f\colon [0,\infty)\to [3,\infty)$ be such that 
$$
f(t)\underset{t\to\infty}{\longrightarrow}\infty.
$$
Then there exists a smooth function
$\tilde f\colon[0,\infty)\to [0,\infty)$ such that
\eq[f1]{
\tilde{f} \leqslant f,\quad \tilde f(0)=0,\quad \tilde{f}(t)\underset{t\to\infty}{\longrightarrow}\infty,
}
and for all $t>0,$
\eq[f2]{
\tilde f'(t)>0,\quad
\tilde f''(t)<0,\quad
\tilde f'''(t)>0.
}
\end{lemma}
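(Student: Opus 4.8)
The plan is to reduce the whole statement to the construction of a single convex, decreasing derivative, and then to integrate.

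\emph{Step 1: reduce to a non-decreasing profile.} I would first replace $f$ by $\phi(t):=\inf_{s\ge t}f(s)$. This is non-decreasing, satisfies $3\le\phi\le f$ and $\phi(t)\to\infty$, and since $\tilde f\le\phi$ forces $\tilde f\le f$, it suffices to keep $\tilde f$ below $\phi$. For integers $n\ge 3$ put $t_n:=\inf\{t\ge 0:\phi(t)>n\}$ and $t_2:=0$; these are non-decreasing, tend to $\infty$, and $\phi(t)>n$ for every $t>t_n$. So the task becomes purely combinatorial: build a non-decreasing $\tilde f$ with $\tilde f(0)=0$ and $\tilde f(t)\to\infty$ that threads below the staircase cut out by the $t_n$'s (concretely $\tilde f(\sigma_k)\le k+2$ for the auxiliary sequence $\sigma_k$ introduced below), and then arrange the smoothness and the sign conditions \eqref{f2}.

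\emph{Step 2: breakpoints that spread geometrically.} I would choose $0=\sigma_0<\sigma_1<\sigma_2<\cdots\to\infty$ recursively, for instance $\sigma_1=1+t_4$ and $\sigma_{k+1}=2\sigma_k+t_{k+4}$. Two features are built in: each $\sigma_k$ exceeds the pertinent crossing time $t_{k+3}$ (so that $\tilde f(\sigma_k)\le k+2$ really does give $\tilde f\le\phi$ on $[\sigma_k,\sigma_{k+1})$), and the gaps $g_k:=\sigma_{k+1}-\sigma_k$ satisfy $g_{k+1}\ge 2g_k$.

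\emph{Step 3: define $\tilde f$ through its derivative.} Let $p_0$ be the continuous, piecewise-linear function with $p_0(\sigma_k)=1/g_k$, affine on each $[\sigma_k,\sigma_{k+1}]$, and prolonged affinely into a left neighbourhood of $0$. Then $p_0>0$; a trapezoid computation gives $\int_{\sigma_k}^{\sigma_{k+1}}p_0=\tfrac12(1+g_k/g_{k+1})\in(\tfrac12,\tfrac34]$, so $\int_0^\infty p_0=\infty$ while $\int_0^{\sigma_k}p_0\le C+\tfrac34 k$ stays below the staircase; and, because $g_{k+1}\ge 2g_k$, the slope of $p_0$ on $[\sigma_k,\sigma_{k+1}]$, namely $-\frac{g_{k+1}-g_k}{g_k^2 g_{k+1}}$, is strictly increasing in $k$, since $\frac{g_{k+1}-g_k}{g_k^2 g_{k+1}}\ge\frac{1}{g_k g_{k+1}}>\frac{1}{g_{k+1}^2}\ge\frac{g_{k+2}-g_{k+1}}{g_{k+1}^2 g_{k+2}}$. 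Hence $p_0$ is convex and decreasing. Mollifying $p_0$ with a narrow bump produces $p\in C^\infty$ that is still positive, decreasing and convex and whose antiderivative differs uniformly from that of $p_0$ by as little as desired; adding a tiny $\varepsilon e^{-t}$ makes $p''>0$ everywhere without disturbing the signs of $p,p'$ or the integral bounds. Then $\tilde f(t):=\int_0^t p$ is smooth on $[0,\infty)$, with $\tilde f(0)=0$, $\tilde f'=p>0$, $\tilde f''=p'<0$, $\tilde f'''=p''>0$, $\tilde f(t)\to\infty$, and $\tilde f\le\phi\le f$; that is precisely \eqref{f1}--\eqref{f2}.

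\emph{Main obstacle.} Everything here is routine except one point: the staircase can be arbitrarily flat, so $\tilde f$, and hence $p=\tilde f'$, may be forced to decay arbitrarily fast while still having divergent integral, and a convex decreasing function with these competing demands must be tailored to $\phi$. The device that disposes of this cleanly is exactly the geometric spacing $g_{k+1}\ge 2g_k$ of Step 2: it makes the slopes of the interpolant $p_0$ strictly monotone, so $p_0$ is honestly convex and the rest (fixing the mollification width and $\varepsilon$, tracking the additive constants in the staircase comparison, and verifying the elementary properties of $\phi$) is bookkeeping.
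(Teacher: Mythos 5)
Your proposal is correct in structure but takes a genuinely different route from the paper. The paper builds $\tilde f$ directly as an infinite sum of exponentials: it defines a similar crossing-time sequence, sets $t_0=1$, $t_m=\inf\{T>2t_{m-1}\colon f\geqslant m\text{ on }(T,\infty)\}$, and then puts $\tilde f(t)=\sum_{m\geqslant 3}\bigl(1-e^{-t/t_m}\bigr)$. Because $t_m\geqslant 2^m$, the series and all its termwise derivatives converge uniformly on bounded intervals, each summand already has $\tilde f'>0$, $\tilde f''<0$, $\tilde f'''>0$, and the inequality $\tilde f\leqslant f$ is a counting estimate. What the paper's route buys is that the sign conditions and smoothness come for free from the building block $1-e^{-t/t_m}$; no interpolation, mollification, or $\varepsilon e^{-t}$ correction is needed. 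What your route buys is a more hands-on, geometric construction through the derivative: you choose the decay of $p=\tilde f'$ explicitly, which makes the mechanism (convex decreasing $p$ with $\int p=\infty$ threading under the staircase) transparent.

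There is, however, a small genuine gap in Step 2. With $\sigma_0=0$, $\sigma_1=1+t_4$, and $\sigma_{k+1}=2\sigma_k+t_{k+4}$, the identity $g_{k+1}-2g_k=t_{k+5}-t_{k+4}\geqslant 0$ holds only for $k\geqslant 1$. For $k=0$ one gets $g_1-2g_0=(1+t_4+t_5)-2(1+t_4)=t_5-t_4-1$, which can be negative (e.g.\ if $t_5=t_4$). Since your convexity argument for $p_0$ needs $g_{k+1}\geqslant 2g_k$ at the first step of the chain of inequalities, $p_0$ may fail to be convex at the breakpoint $\sigma_1$, and mollification plus $\varepsilon e^{-t}$ cannot repair an actual concave kink. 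This is easily fixed --- for instance, run the recursion $\sigma_{k+1}=2\sigma_k+t_{k+4}+1$ from $\sigma_0=0$ (so that $\sigma_1=t_4+1>0$ still and $g_{k+1}-2g_k=t_{k+5}-t_{k+4}\geqslant 0$ for \emph{all} $k\geqslant 0$), or simply begin the piecewise-linear interpolation at $\sigma_1$ and extend $p_0$ to $[0,\sigma_1]$ by a single affine piece with slope no less than that on $[\sigma_1,\sigma_2]$. With that correction, the rest of your verification (the trapezoid estimate, divergence of $\int p_0$, mollification, the $\varepsilon e^{-t}$ perturbation, and the pointwise comparison with $\phi$) is sound.
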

\begin{proof}
Define a sequence $\{t_m\}$ by  
$$
t_0=1 ,\quad t_m=\inf\{T\colon   T> 2t_{m-1} \text{~~and~~} f(t)\geqslant m,~\forall t>T \}.
$$
Now,
\eq[f3]{
\tilde{f}(t)= \sum_{m=3}^\infty (1- \exp(-t/t_m)),~t\geqslant0.
}
We have $t_m\geqslant 2^m,$ hence 
$$
1- \exp(-t/t_m)\leqslant \frac t{t_m}\leqslant\frac t{2^m}, 
$$
which means that the series in~\eqref{f3} converges uniformly in $t$ on any bounded subinterval of $[0,\infty).$ Likewise, the series of the derivatives of $1- \exp(-t/t_m)$ of any order converges uniformly. Differentiating term-wise, we readily obtain~\eqref{f2}.
The fact that $\tilde f(t)\underset{t\to\infty}{\longrightarrow}\infty$ follows since each summand in~\eqref{f3} is non-negative and increasing in $t$,  with limit $1$ as $t\to\infty.$

It remains to prove $\tilde f\leqslant f$. Suppose $t \in[t_{m}, t_{m+1})$ with $m \geqslant 3$, then $f(t)\geqslant m$. Note that 
 \begin{align*}
 \tilde f(t) &\leqslant \tilde f(t_{m+1})=\sum_{k=3}^\infty (1- \exp(-t_{m+1}/t_k) ) \\
& = \sum_{k=3}^m (1- \exp(-t_{m+1}/t_k) )+ \sum_{k=m+1}^\infty  (1- \exp(-t_{m+1}/t_k) )\\
&< m-2\,+\,\sum_{k=m+1}^\infty \frac{t_{m+1}}{t_k}\leqslant m-2\, +\,\sum_{k=m+1}^\infty 2^{m+1-k}=m\leqslant f(t).
 \end{align*}
 If $t \in [0, t_3)$, then 
 $$\tilde{f}(t) \leqslant \tilde{f}(t_3)\leqslant 3 \leqslant f(t). $$
\end{proof} 
To finish the proof of the theorem, we simply take $f=h+3$ in the lemma and let $\tilde h=\tilde f.$  
\end{proof}
\section{Is the condition $h(t)\mathop{\longrightarrow}\limits_{t\to \infty}\infty$ necessary?}
\label{nec}
We have shown that if
\eq[lim]{
h(t)\underset{t\to\infty}{\longrightarrow}\infty,
}
then $K_h$ implies $\BMO.$ By Lemma~\ref{t2}, the equivalent statement also holds in the dyadic case. A natural question arises: is~\eqref{lim} necessary for this implication to hold? If $h$ is additionally assumed to be increasing, the condition is trivially necessary, as otherwise $h$ is a bounded function. What can be said without that assumption? In this section, we give the answers for both $\BMO$ and $\BMO^d;$ the two cases turn out to be different. For the sake of simplicity we consider only $n=1$, but our constructions and proofs can be modified to fit any dimension.
\medskip

\subsection{The condition $\ds\lim_{t\to\infty}h(t)=\infty$ is not necessary for $K_h\Rightarrow \BMO$ }
~\medskip

We present a suitable function $h$ on $[0,\infty)$ for which~\eqref{lim} fails, but $K_h$ still implies $\BMO.$
Let $h$ be any continuous non-negative function such that
$$
h(t)=t^2 \quad\mbox{if}\quad t\in[0,1]\cup\Big(\bigcup_{k=1}^\infty \big[k+{\textstyle\frac14},k+{\textstyle\frac34}\big]\Big).
$$
At present we do not specify $h(t)$ for other $t;$ the following lemma works for all such $h.$
\begin{lemma}
If $Q$ is an interval and $\varphi\in L^1(Q)$ is such that $\kf<M$ for some $M>0,$ then $\varphi \in \BMO(Q)$ and $\|\varphi\|_{\BMO(Q)}\leqslant \sqrt{10M}.$ 
\end{lemma}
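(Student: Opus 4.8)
The plan is to exploit the fact that $h(t)=t^2$ on a union of intervals that is ``dense enough'' at infinity, so that from the finiteness of $\kf$ we can extract control of the second moment $\av{|\varphi-\av\varphi J|^2}J$ directly. Fix a subinterval $J\subset Q$ and write $a=\av{\varphi}J$, and set $g=|\varphi-a|$, so that $\av{g}J$ and $\av{g^2}J$ are exactly the quantities we want to bound (recall $\|\varphi\|^2_{\BMO(Q)}=\sup_J(\av{g^2}J)$). The key pointwise observation is that for \emph{every} $s\ge0$ one has $s^2\le 10\,h(s)+C_0$ for a suitable absolute constant, or more precisely that $h(s)\ge \frac{1}{10}s^2$ whenever $s$ is not too small — because on $[0,1]$ and on every ``good'' interval $[k+\frac14,k+\frac34]$ we have exact equality $h(s)=s^2$, and any real $s\ge \frac14$ lies within distance $\frac14$ of a point $s'$ in a good interval with $s'\ge s-\frac12 \ge \frac12 s$ (for $s\ge 1$), giving $h(s)\ge\dots$ — here one must be slightly careful since $h$ is merely continuous, not monotone, off the good set. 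The cleanest route is: for $s\ge 1$ pick the good point $s'\in[\lfloor s\rfloor-1+\frac34,\ \lceil s\rceil+\frac14]$ nearest to $s$; actually it is enough to note $\big[k+\frac14,k+\frac34\big]$ meets every interval of length $1$, so every $s\ge\frac14$ is within $\frac14$ of some $s'$ with $h(s')=s'^2$ and $s'\ge s-\frac14$.

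First I would make the elementary reduction from $h$-control to second-moment control. The issue is that $h$ is only controlled on the good set, so I cannot simply say $h(g(x))\ge \frac1{10}g(x)^2$ pointwise; instead I integrate. Split $J$ into $J_{\text{small}}=\{x\in J: g(x)<1\}$ and $J_{\text{large}}=\{x\in J: g(x)\ge 1\}$. On $J_{\text{small}}$ we have $g(x)^2=h(g(x))$ exactly, so $\int_{J_{\text{small}}}g^2 = \int_{J_{\text{small}}}h(g)\le |J|\,\kf$. On $J_{\text{large}}$ I want to compare $\int_{J_{\text{large}}}g^2$ with $\int_{J_{\text{large}}}h(g)$, which is the step where the density of the good set must be used together with \emph{some} a priori integrability; the honest way is to first establish $\varphi\in\BMO(Q)$ qualitatively and only then derive the numerical bound $\sqrt{10M}$.

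For the qualitative step I would invoke Theorem~\ref{t1} after replacing $h$ by the minorant $h_\flat(t)=\min(h(t), \operatorname{dist}(t,\text{good set})\text{-adjusted})$ — or, more simply, observe that the function $\underline h(t):=\tfrac1{10}\min(t^2,?)$... Actually the cleanest is: define $g(t)=\sup\{s^2: s\le t,\ s\in\text{good set}\}$; then $g$ is nondecreasing, $g(t)\to\infty$, and $g(t)\le h(t)$ is \emph{false} in general — so instead I use that on the good set $h$ agrees with $t^2$ and argue that $\kf<\infty$ forces $\int_J h(|\varphi-\av\varphi J|)<\infty$, hence $|\varphi-\av\varphi J|$ cannot spend too much measure on any good interval, and a Chebyshev/layer-cake estimate over the good intervals $[k+\frac14,k+\frac34]$ bounds the distribution function of $|\varphi-\av\varphi J|$, giving $\varphi\in\BMO$. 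Once $\av{g^2}J<\infty$ is known to be finite for each $J$, the numerical bound follows: for any $x\in J_{\text{large}}$ with value $g(x)=s$, the nearest good point $s'\ge s-\frac14\ge\frac34 s$ has $h(s')=s'^2\ge\frac9{16}s^2$; using continuity of $h$ and a covering of $J_{\text{large}}$ by the preimages of good intervals one shows $\int_{J_{\text{large}}}h(g)\ge \frac1{10}\int_{J_{\text{large}}}g^2 - (\text{boundary terms absorbed into }|J|M)$, and combining the two pieces yields $\av{g^2}J\le 10\kf\le 10M$, whence $\|\varphi\|_{\BMO(Q)}\le\sqrt{10M}$.

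The main obstacle is precisely the passage from ``$h(t)=t^2$ only on the good set'' to a genuine pointwise or integrated lower bound $h(g(x))\gtrsim g(x)^2$: since $h$ is arbitrary (and possibly small) in the gaps $(k+\frac34,k+1+\frac14)$, a function $\varphi$ could conceivably concentrate all of its large values in those gaps. The resolution must use that the gaps have length $\frac12$ while the good intervals also have length $\frac12$, so no value of $g$ can \emph{avoid} being near a good point by more than $\frac14$; combined with the continuity of $h$ this forces, for each level $\lambda$, a comparison between $|\{g>\lambda\}|$ and $\int h(g)$ via the good interval straddling $\lambda$. Making this robust — choosing the constant $10$ so it swallows the worst case $s'/s = 3/4$ (i.e.\ $(4/3)^2 = 16/9 < 10$, with room to spare for the small-value and boundary contributions) — is the crux, and I expect the bookkeeping, not any deep idea, to be where the effort goes.
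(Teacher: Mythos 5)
There is a genuine gap, and it sits exactly at the place you call the ``main obstacle.'' You hope to pass from $h(s')=(s')^2$ at a nearby good point $s'$ to a lower bound on $h(s)$ for $s$ in a gap, by invoking the continuity of $h$. But continuity gives no such thing: $h$ is allowed to be \emph{any} continuous nonnegative function on the gaps $(k+\tfrac34,k+\tfrac54)$, and in particular may vanish identically there (the paper even suggests $h(k)=0$ for all integers $k>1$). So a function $\varphi$ with $\av{\varphi}J=0$ that takes only values in $\{0\}\cup\{\pm k: k\ge 2\}$ can have $\int_J h(|\varphi-\av{\varphi}J|)=0$ while $\int_J \varphi^2$ is huge; no pointwise inequality $h(g(x))\gtrsim g(x)^2$ and no layer--cake/Chebyshev estimate over the good intervals in terms of $\int_J h(g)$ alone can work. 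Your proposed qualitative reduction (``invoke Theorem~\ref{t1} with a minorant of $h$'') fails for the same reason: any minorant of this $h$ is bounded, so~\eqref{h1} fails for it and Theorem~\ref{t1} is inapplicable.

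What the proof actually needs is a mechanism that moves the values of $\varphi$ sitting in a gap \emph{into} the good set, and the only tool the hypothesis $\kf<M$ offers is testing against different subintervals with different averages. The paper achieves the shift via Klemes's rising--sun lemma: after reducing to $\int_{A_+}\varphi^2>4M|J|$ where $A_+=\{\varphi>1,\ h(|\varphi|)\neq\varphi^2\}$, one selects disjoint subintervals $L_k\subset J$ with $\av{\varphi}{L_k}=\tfrac12$ covering $A_+$ (up to null sets), so that for $x\in A_+$ the oscillation $|\varphi(x)-\tfrac12|$ has fractional part in $(\tfrac14,\tfrac34)$ and hence lies in the good set, giving $h(|\varphi-\tfrac12|)=(\varphi-\tfrac12)^2$ there. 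Summing $M|L_k|\ge\int_{L_k\cap A_+}(\varphi-\tfrac12)^2$ and using $\varphi-\tfrac12>\varphi/2$ on $A_+$ yields the contradiction $M|J|\ge\int_{A_+}(\varphi-\tfrac12)^2>\tfrac14\int_{A_+}\varphi^2>M|J|$. This shift--by--$\tfrac12$ via a stopping--time family of subintervals, not continuity of $h$, is the missing idea in your argument; without it the ``bookkeeping'' cannot close.
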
 
\begin{proof}
Fix any subinterval $J \subset Q$. We have to show that 
$$\av{(\varphi-\av{\varphi}J)^2}J\leqslant10M.$$
Suppose not. Without loss of generality, we can assume $\av{\varphi}J=0$. Let 
\begin{align*}
A=\{x \in J\colon h(|\varphi(x)|)\ne \varphi^2(x)\},\quad A_+=\{x \in A\colon \varphi(x)>1\},\quad A_-=\{x \in A\colon \varphi(x)<-1\}.
\end{align*}
Thus, $A=A_+\cup A_-.$ We have
$$
10M|J|<\int_J\varphi^2=\int_{J\setminus A}h(|\varphi|)+\int_A \varphi^2.
$$
Since $\frac1{|J|}\int_{J\setminus A}h(|\varphi|)\leqslant \av{h(|\varphi-\av{\varphi}J|)}J\leqslant K_{h,Q}(\varphi)<M,$ we obtain
$$
\int_A \varphi^2>9M|J|,
$$
therefore, either 
\begin{equation}\label{contr}
\int_{A_+} \varphi^2>4M|J|\qquad\text{or}\qquad \int_{A_-} \varphi^2>4M|J|.
\end{equation}
It suffices to consider only the first case; the second one is completely symmetric. Since $\av{\varphi}J=0,$ by Klemes's version of the Riesz rising sun lemma (see~\cite{klemes}), there exists an at most countable set  $\{L_k\}$ of disjoint subintervals of $J$ such that $\av{\varphi}{L_k}=1/2$ and $\varphi\leqslant 1/2$ {\it a.e.} on $J\setminus \cup{L_k}.$ If $x \notin \cup L_k,$ then $\varphi(x) \leqslant 1/2,$ therefore $|A_+\setminus \cup L_k|=0.$
Now,
$$
|L_k|M\geqslant \int_{L_k}h\big(|\varphi-\av{\varphi}{L_k}|\big)=\int_{L_k}h\big(|\varphi-{\textstyle\frac12}|\big)\geqslant \int_{L_k\cap A_+}h\big(|\varphi-{\textstyle\frac12}|\big)=\int_{L_k\cap A_+}(\varphi-{\textstyle\frac12})^2. 
$$
The last equality uses the fact that if $x \in A_+$, then $|\varphi(x)-j|<1/4$ for some integer $j$ and so the fractional part $\{\varphi(x)-1/2\}\in(1/4,\,3/4).$ We now sum these inequalities over $k$ to get
$$
\int_{A_+}(\varphi-{\textstyle\frac12})^2\leqslant M\big(\sum |L_k|\big)\leqslant M|J|,
$$
which contradicts the first inequality in~\eqref{contr}, since on $A_+$ we have $\varphi-1/2>\varphi/2>0$ and thus 
$$
\int_{A_+}(\varphi-{\textstyle\frac12})^2>\frac14\int_{A_+}\varphi^2>M|J|.
$$
\vspace{-.6cm}

\end{proof}
Observe that we can easily ensure that $h(t)\underset{t\to\infty}{\centernot\longrightarrow}\infty$, for example by requiring that $h(k)=0$ for all integers $k>1.$
\medskip

\subsection{The condition $\ds\lim_{t\to\infty}h(t)=\infty$ is necessary for $K^d_h\Rightarrow\BMO^d$ }
~\medskip

The example of $h$ given in the previous section clearly does not work in the dyadic case. As shown below, no other example can work either, meaning that the limit condition on $h$ is necessary in this case.

\begin{lemma}
If $h$ is any non-negative function on $[0,\infty)$ for which~\eqref{lim} fails\textup, and $Q$ is any interval\textup, then there exists an integrable function $\varphi$ on $Q$ such that $K^d_{h,Q}(\varphi)<\infty$ while $\varphi\notin\BMO^d(Q).$ 
\end{lemma}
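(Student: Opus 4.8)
The hypothesis is that $h(t) \not\to \infty$, so there is a sequence $s_k \to \infty$ and a constant $M$ with $h(s_k) \le M$ for all $k$; passing to a subsequence we may assume $s_{k+1} \ge 2 s_k + 1$, say, so the $s_k$ are spread out. The goal is to build an integrable $\varphi$ on $Q$ that is \emph{not} in $\BMO^d(Q)$ but for which $\av{h(|\varphi - \av{\varphi}J|)}J$ stays bounded over all dyadic subintervals $J$. The idea is to make $\varphi$ a sum of bumps supported on a nested sequence of dyadic intervals $Q = I_0 \supset I_1 \supset I_2 \supset \cdots$, with $|I_{j+1}| = 2^{-n_j}|I_j|$ for suitable integers $n_j$, and with the size of the $j$-th bump growing like $s_{k_j}$ along a subsequence $k_j$, chosen so that whenever you compute a mean oscillation over a dyadic interval, the relevant value of the argument of $h$ lands exactly (or within a controlled set) on a point where $h$ is small.

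**Key steps, in order.**

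\emph{Step 1: Reduce to a model configuration.} Without loss of generality take $Q = [0,1]$ (in $n$ dimensions, $Q$ the unit cube) and work with the standard dyadic filtration. Fix the nested dyadic cubes $I_0 \supset I_1 \supset \cdots$ with $I_j \in D_{m_j}(Q)$ for a rapidly increasing sequence $m_0 = 0 < m_1 < m_2 < \cdots$ to be chosen.

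\emph{Step 2: Define $\varphi$ as a telescoping sum.} Set
$$
\varphi = \sum_{j \ge 1} a_j \big(\chi_{I_j} - \av{\chi_{I_j}}{I_{j-1}} \chi_{I_{j-1}}\big)
$$
or, more simply, let $\varphi$ be constant $c_j$ on $I_j \setminus I_{j+1}$ with the $c_j$ chosen so that $\av{\varphi}{I_j} = 0$ for every $j$ — this is the standard unbounded dyadic $\BMO$-type construction (a "staircase"). The amplitudes are calibrated so that the dyadic oscillation $(\av{\varphi^2}{I_j} - \av{\varphi}{I_j}^2)^{1/2}$ grows without bound in $j$ (this forces $\varphi \notin \BMO^d(Q)$), yet the pointwise values $|\varphi - \av{\varphi}J|$ that occur on any dyadic $J$ take only values in a prescribed small set.

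\emph{Step 3: Check $\varphi \notin \BMO^d(Q)$.} By construction $\av{\varphi^2}{I_j} \to \infty$ while $\av{\varphi}{I_j} = 0$, so $\|\varphi\|_{\BMO^d(Q)} = \infty$. This is immediate once the amplitudes are chosen to diverge.

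\emph{Step 4: Check $K^d_{h,Q}(\varphi) < \infty$.} This is the heart of the argument. Take any $J \in D(Q)$. Either $J$ is disjoint from the "active" part of the staircase below some level (then $\varphi$ is constant on $J$, the argument of $h$ is $0$, and $h(0)$ contributes a fixed amount, or is $0$ if we arrange $h(0)=0$ — but we cannot assume that, so we just note $h(0)$ is a fixed finite number), or $J$ contains one of the nested cubes $I_j$ and sits between two consecutive levels. In the latter case, decompose $\av{h(|\varphi - \av{\varphi}J|)}J$ according to the level sets of $\varphi$ on $J$. Because of the rapid growth $m_{j+1} - m_j$ large, the mass of $J$ concentrated on the smallest scale is a tiny fraction, and the large values of $|\varphi - \av{\varphi}J|$ occur only on a set of exponentially small relative measure. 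The amplitudes $c_j$ are then chosen precisely so that the value $|\varphi - \av{\varphi}J|$ taken on the "deep" part equals one of the $s_k$ (where $h \le M$), so that part contributes at most $M$; and the shallow part contributes boundedly because there $\varphi - \av{\varphi}J$ is bounded and $h$ is bounded on bounded sets (here we may need $h$ continuous, or just locally bounded — the paper's standing framework gives us enough; if $h$ is merely measurable one truncates). Summing the (geometrically decaying) contributions over all relevant levels gives a bound independent of $J$.

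**Main obstacle.** The delicate point is Step 4: one must choose the two interlocking sequences — the scales $m_j$ (equivalently the depths $n_j = m_{j+1}-m_j$) and the amplitudes $c_j$ — so that \emph{simultaneously} (i) the $\BMO^d$ oscillation diverges, (ii) for \emph{every} dyadic $J$, the set where $|\varphi - \av{\varphi}J|$ is large is matched to the values $s_k$ where $h$ is small, and (iii) the bounded-but-nonzero contributions from $h$ evaluated at small or intermediate arguments sum to something finite. The competition is between (i), which wants the amplitudes large, and the need in (ii)–(iii) to keep $|\varphi - \av{\varphi}J|$ pinned near the sparse good values $s_k$ while the "error" from averaging against shallow levels stays small; this is exactly what forces the scale gaps $n_j$ to grow fast (e.g. so that $2^{-n_j}$ beats the amplitude growth). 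Getting a single clean inductive choice that works uniformly over all $J \in D(Q)$ — rather than just over the $J$ in the nested chain — is the part that requires care, and is presumably where the author's construction does its real work.
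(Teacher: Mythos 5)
Your plan is built around the standard nested ``staircase'' picture for an unbounded dyadic BMO-like function, with $\varphi$ constant on the annuli $I_j\setminus I_{j+1}$, amplitudes calibrated to the good values $s_k$, and a lacunary choice of scales. That is a genuinely different route from the paper, and it has a real gap exactly where you flag it: Step 4 is the whole proof, and you do not close it. Worse, the difficulty is structural, not merely technical. For a nested staircase and a dyadic $J$ that crosses several levels, $\av{\varphi}J$ is not zero and $|\varphi-\av{\varphi}J|$ takes \emph{intermediate} values that depend on $J$; you cannot pin all of them to the sparse set $\{s_k\}$, so you are forced to invoke local boundedness of $h$ on the ``shallow'' part. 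But the lemma only assumes $h\geqslant 0$ (not locally bounded, not continuous), and $K^d_{h,Q}(\varphi)$ can already be infinite from a single bad intermediate value. Finally, the specific staircase you write down is internally inconsistent: if $\varphi$ is constant $c_j$ on $I_j\setminus I_{j+1}$ and $\av{\varphi}{I_j}=0$ for \emph{every} $j$ along the nested chain, then the telescoping identity forces $c_j=0$ for all $j$, so $\varphi\equiv 0$.

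The paper's construction dissolves all of these issues by replacing the nested chain with \emph{pairwise disjoint} dyadic intervals. One puts the $L^\infty$-normalized Haar function of $J_k=(2^{-k},2^{-k+1})$, scaled by $t_j$, on a sparse subset of the $J_k$'s, choosing $n_j$ so that $2^{n_j}\leqslant t_j^2\leqslant 2^{n_j+1}$. The payoff is threefold. First, for every dyadic $J$ on which $\varphi$ is not constant one has $\av{\varphi}J=0$ (such $J$ must be of the form $(0,2^{-m})$), and $|\varphi-\av{\varphi}J|=|\varphi|$ takes values \emph{only} in $\{0\}\cup\{t_1,t_2,\dots\}$; thus $\av{h(|\varphi-\av{\varphi}J|)}J\leqslant\max\{h(0),M\}$ with no calibration, no intermediate values, and no regularity assumptions on $h$ beyond nonnegativity. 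Second, the scaling $t_j\asymp 2^{n_j/2}$ is chosen not to match $h$ but to split $L^1$ from $L^2$: $\sum t_j 2^{-n_j}<\infty$ gives $\varphi\in L^1$, while $\sum t_j^2 2^{-n_j}=\infty$ gives $\varphi\notin L^2(Q)$, hence $\varphi\notin\BMO^d(Q)$. Third, there is no ``competition'' between diverging oscillation and controlled values, because the Haar bumps never accumulate on top of one another. I would recommend replacing the nested staircase with this disjoint-Haar construction; as it stands, your Step 4 is an acknowledged gap rather than a proof.
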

\begin{proof}
Without loss of generality, take $Q=(0,1).$ Since~\eqref{lim} fails, there exists a non-negative sequence $\{t_n\}$ and a number $M>0$ such that 
$$
t_n\underset{n\to\infty}{\longrightarrow}\infty\quad\text{~and~}\quad h(t_n) <M.
$$
By taking an appropriate subsequence, if needed, we may assume that each interval $[2^k,2^{k+1}]$ contains no more than one point $t_n.$ Now we can find a sequence of integers $\{n_j\}$ such that $2^{n_j}\leqslant t_j^2\leqslant 2^{n_j+1}.$ Define $\varphi$ on $Q$ by
$$
\varphi=\sum_{j=1}^\infty t_j h_{n_j},
$$
where $h_k$ is the $L^\infty$-normalized Haar function of the interval $J_k\df(2^{-k},2^{-k+1}),$ $h_k=\chi_{J_k^+}-\chi_{J_k^-}$ (here $J_k^-$ and $J_k^+$ are the left and right halves of $J_k,$ respectively). Observe that $\varphi\in L^1(Q):$
$$
\int_Q|\varphi|=\sum_{j=1}^\infty t_j\,2^{-n_j}\leqslant \sum_{j=1}^\infty 2^{(n_j+1)/2}\, 2^{-n_j}<\infty.
$$
Now, consider an interval $J\in D(Q).$ If $\varphi$ is not {\it a.e.} constant on $J,$ then $\av{\varphi}J=0$ and so $|\varphi-\av{\varphi}J|$ takes values in the set $\{t_1, t_2,...\}$ on $J.$ If $\varphi$ is constant on $J$, then $|\varphi-\av{\varphi}J|=0$ on $J.$ Therefore, $K^d_{h,Q}(\varphi)\leqslant\max\{h(0),M\}<\infty.$ However, $\varphi$ is not in $\BMO(Q)$ as it is not square-integrable on $Q:$
$$
\int_Q\varphi^2=\sum_{j=1}^\infty t^2_j\,2^{-n_j}=\infty.
$$
\end{proof}
\section*{Acknowledgment}
This research is supported in part by the Chebyshev Laboratory  (Department of Mathematics and Mechanics, St. Petersburg State University)  under the RF Government grant 11.G34.31.0026, and by JSC ``Gazprom Neft''. L.~Slavin is supported by the NSF (DMS-1041763). D.~M.~Stolyarov is supported by Rokhlin grant and the RFBR (grant 11-01-00526). V.~Vasyunin is supported by the RFBR (grant 11-01-00584-a).
P.~B.~Zatitskiy is supported by President of Russia grant for young researchers MK-6133.2013.1 and by the RFBR (grant 13-01-12422 ofi\_m2).
The authors are grateful to E.~Dubtsov for asking the main question considered in this work, and to the organizers of the summer school ``Bellman function method in harmonic analysis'' at Institut Mittag-Leffler in July 2013, where this project was conceived. Special thanks go to the referee for thoughtful comments and suggestions.


\begin{thebibliography}{99} 
\bibitem{bel_ref_short}
P. Ivanishvili, N. Osipov, D. Stolyarov, V. Vasyunin, P. Zatitskiy. On Bellman function for extremal problems in BMO. {\it C. R. Math. Acad. Sci. Paris}, 350 (2012), no. 11-12, 561--564
\bibitem{bel_ref_staraya}
P. Ivanishvili, N. Osipov, D. Stolyarov, V. Vasyunin, P. Zatitskiy, Bellman function for extremal problems in BMO. Preprint. http://arxiv.org/abs/1205.7018, 2012
\bibitem{john}
F. John, Quasi isometric mappings, in: {\it Semineri 1962–1963 di Analisi Algebra, Geometria e Topologia}, vol. II, Rome, 1965.
\bibitem{jn}
F. John, L. Nirenberg. On functions of bounded mean oscillation. {\it Comm. Pure Appl. Math.}, Vol. 14 (1961), pp.~415--426.
\bibitem{klemes}
I. Klemes, A mean oscillation inequality, {\it Proc. Amer. Math. Soc.}, Vol. 93 (1985), pp.~497--500.
\bibitem{ly}
R. Long, L. Yang, BMO functions in spaces of homogeneous type. {\it Sci. Sinica Ser. A} 27 (1984), no. 7, pp.~695--708.
\bibitem{st}
X. Shi, A. Torchinsky, Local sharp maximal functions in spaces of homogenous type. {\it Sci. Sinica Ser. A} 30 (1987), no.~5, pp.~473--480.
\bibitem{slavin}
L. Slavin, The John--Nirenberg constant of $\BMO^p, 1\leqslant p\leqslant 2.$ Submitted.
\bibitem{sv}
L. Slavin, V. Vasyunin. Sharp results in the integral-form 
John--Nirenberg inequality. Trans. Amer. Math. Soc., Vol.~363, No.~8 (2011), pp.~4135--4169.
\bibitem{svas} 
L. Slavin, V. Vasyunin. Sharp $L^p$ estimates on BMO. {\it Indiana Univ. Math, J.}, Vol. 61 (2012), no. 3, pp.~1051-1110.
\bibitem{stromberg}
J.-O. Str\"omberg, Bounded mean oscillation with Orlicz norms and duality of Hardy spaces. {\it Indiana Univ. Math. J.}, Vol. 28 (1979), no. 3, pp.~511--544. \bibitem{vv}
V. Vasyunin, A. Volberg, Sharp constants in the classical weak form of the John--Nirenberg inequality. Submitted. http://www.mfo.de/scientific-programme/publications/owp/2013/OWP2013\_07.pdf

\end{thebibliography}
\end{document}